\title{Sums of dilates
}
\author{Boris Bukh}
\date{}
\newcommand*{\Z}{\mathbb{Z}}
\newcommand*{\N}{\mathbb{N}}
\newcommand*{\R}{\mathbb{R}}
\newtheorem{theorem}{Theorem}
\newtheorem{lemma}[theorem]{Lemma}
\newtheorem{corollary}[theorem]{Corollary}
\newtheorem{definition}[theorem]{Definition}
\newtheorem{question}[theorem]{Question}
\newtheorem{claim}{Claim}
\newcommand*{\abs}[1]{\lvert #1\rvert}
\newcommand*{\norm}[1]{\lVert #1\rVert}
\newcommand*{\blambda}{\bar{\lambda}}
\begin{document}
\maketitle
\begin{abstract}
The $\lambda$-dilate of a set $A$ is $\lambda\cdot A=\{\lambda a:a\in A\}$.
We give an asymptotically sharp lower bound on the size of sumsets of the form
$\lambda_1\cdot A+\dotsb+\lambda_k\cdot A$ for arbitrary integers
$\lambda_1,\dotsc,\lambda_k$ and integer sets $A$. We also
establish an upper bound for such sums, which is similar to, but often 
stronger than Pl\"unnecke's inequality. 
\end{abstract}

\section*{Introduction}
For sets $A,B$ in an abelian group $G$ their sumset is $A+B=\{a+b:a\in A,\ b\in B\}$.
For $\lambda\in \Z$ the dilate of $A$ by $\lambda$ is
$\lambda\cdot A=\{\lambda a : a\in A\}$. Expressions of the form
\begin{equation}\label{genericdilatesum}
\lambda_1\cdot A+\dotsb+\lambda_k\cdot A
\end{equation}
appear frequently in combinatorial number theory. For $k=2$ they appeared in
the work of Nathanson, O'Bryant, Orosz, Ruzsa, and Silva on binary linear forms\cite{cite:binary_forms}. For small $k$ they appeared in the proofs of
sum-product estimates in $\Z/p\Z$ of Garaev and Katz-Shen\cite{cite:garaev_sumprod,cite:katz_shen_sumprod}. 
They played an important part in the solution
to a problem of Ruzsa on symmetric linear equations \cite{cite:ruzsatriv}.

The problem of giving a lower bound on a sum of the form 
\eqref{genericdilatesum} first occurred in the work of {\L}aba and Konyagin
on distances in well-distributed planar sets 
\cite{cite:konyagin_laba_transcendental}.
They treated the case of $A+\lambda\cdot A$ for $G=\R$ and 
transcendental~$\lambda$.  The general problem of giving a lower bound 
on the sum of dilates
when $G=\Z$ was treated by Nathanson\cite{cite:nathanson_forms} who in particular proved that 
$\abs{A+2\cdot A}\geq 3\abs{A}-2$ and $\abs{A+\lambda\cdot A}\geq 7\abs{A}/2-O(1)$ for
positive $\lambda\neq 1,2$. Our first result is a sharp
lower bound on the size of $A+3\cdot A$:
\begin{theorem}\label{thmAthreeA}
For every finite set $A\subset \Z$ we have
$\abs{A+3\cdot A}\geq 4\abs{A}-O(1)$.
\end{theorem}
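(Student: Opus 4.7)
The plan is to decompose $A$ by residue modulo $3$: write $A=A_0\cup A_1\cup A_2$ with $A_r=\{a\in A:a\equiv r\pmod 3\}$ and set $R=\{r:A_r\neq\emptyset\}$. Because $a+3b\equiv a\pmod 3$, the sumset splits cleanly as $A+3\cdot A=\bigsqcup_{r\in R}(A_r+3\cdot A)$, and the standard sumset inequality gives $|A_r+3\cdot A|\geq|A_r|+|A|-1$ inside each residue class. Summing over $r\in R$ yields
\[
  |A+3\cdot A|\geq|A|+|R|(|A|-1),
\]
which is already $4|A|-3$ as soon as $|R|=3$, settling that case with room to spare.

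In the case $|R|=1$ the set $A$ lies in a single residue class, so $A=3B+r_0$ for some $B$ with $|B|=|A|$ and strictly smaller range; since $|A+3\cdot A|=|B+3\cdot B|$ one induct on the range. The substantive case is $|R|=2$: after translating, assume $A_2=\emptyset$ and write $A_0=3\cdot A'$, $A_1=3\cdot A''+1$ with $|A'|=p$, $|A''|=q$, and $p+q=|A|$. Computing the two residue-mod-$3$ parts of $A+3\cdot A$ directly gives the identity
\[
  |A+3\cdot A|=|A'+A|+|A''+A|.
\]
The inclusions $3\cdot A'\subset A$ and $3\cdot A''+1\subset A$ yield $|A'+A|\geq|A'+3\cdot A'|$ and $|A''+A|\geq|A''+3\cdot A''|$, reducing the bound to the same statement applied to the strictly smaller sets $A'$ and $A''$, which suggests an induction on the pair $(|A|,\mathrm{range}(A))$ in lexicographic order.

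The main obstacle will be keeping the additive constant uniform throughout the recursion. If the inductive hypothesis only supplies $|X+3\cdot X|\geq 4|X|-C$, then naively combining the two pieces in the $|R|=2$ case produces $4|A|-2C$, so the constant doubles at every level. To avoid this I would sharpen the bound on $|A'+A|$ by exploiting the following observation: when $A'$ itself lies in a single residue class modulo $3$, the two pieces $A'+3\cdot A'$ and $A'+(3\cdot A''+1)$ of $A'+A$ sit in different residue classes mod $3$ and are automatically disjoint, which recovers the missing $\approx|A''|$ contribution and yields a genuine saving. Since a $|R'|=1$ step inside the recursive call collapses $A'$ to a single residue class, every branch of the recursion eventually reaches such a disjoint configuration; organizing the induction so that it always proceeds through one of the "good" cases (either $|R|=3$ or the disjoint configuration within $|R|=2$) should let the loss be absorbed into one universal $O(1)$ constant.
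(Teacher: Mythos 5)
Your reductions are sound as far as they go: the splitting of $A+3\cdot A$ by residues mod $3$, the bound $|A+3\cdot A|\ge 4|A|-3$ when all three classes are occupied, the pure rescaling when only one class is occupied, and the identity $|A+3\cdot A|=|A'+A|+|A''+A|$ in the two-class case are all correct. The gap is in how you close the two-class case. Your only source of savings is the ``good configuration'' in which one of $A'$, $A''$ lies in a single residue class mod $3$; otherwise you fall back on $|A'+A|\ge|A'+3\cdot A'|$ and $|A''+A|\ge|A''+3\cdot A''|$, and the additive constant doubles. The assertion that ``every branch of the recursion eventually reaches such a disjoint configuration'' is false, and it fails exactly on the extremal-type sets. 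Take $A_1=\{0,1,2\}$ and $A_{n+1}=3\cdot A_n\cup(3\cdot A_n+1)$. Every $A_n$ meets at least two residue classes mod $3$, so at every node of your recursion both children (here $A'=A''=A_n$) are spread over at least two classes: no node is ever good, no one-class step ever occurs, and the branches simply terminate in $2^{n-1}$ leaves equal to $\{0,1,2\}$. Your scheme then yields only $|A_n+3\cdot A_n|\ge 9\cdot 2^{n-1}=3|A_n|$, i.e.\ an error term linear in $|A|$ rather than $O(1)$. To repair this you would need a quantitative gain in the ``bad'' case as well --- a lower bound on how much $|A'+A|$ exceeds $|A'+3\cdot A'|$ when $A'$ meets two residue classes --- but controlling that overlap is essentially the original problem over again, and nothing in the proposal supplies it. (Even in the good case, the claim that node-level gains of size $|A|-1$ dominate the accumulated leaf losses over the whole recursion tree is asserted rather than proved, but the bad case is the decisive failure.)

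It is worth noting that the family on which your recursion stalls is precisely the one the paper singles out: sets of the shape $\{0,1,3,4,\dotsc,3k,3k+1\}$ are near-extremal, and the paper's proof (ordering $A$ as $a_1<\dotsb<a_n$ and showing that $B_k=A_k+3\cdot A_k$ grows by at least $4$ at each step, or by $3$ now and at least $5$ at the next step) is built around exactly this obstruction. Any correct argument along your lines would have to extract the missing growth from these sets in the two-class case, which your present outline does not do.
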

It is interesting that there are two essentially different examples
that achieve the lower bound in the theorem above. Besides
the arithmetic progression $A=\{1,\dotsc,n\}$
the lower bound is achieved by the set $A=\{1,2,4,5,\dotsc,3m+1,3m+2\}$.
The proof of theorem~\ref{thmAthreeA} is an easy, but computationally 
involved, induction argument. However, for the reason
that is explained after the proof of theorem~\ref{thmAthreeA},
any similarly-structured induction argument has to get 
computationally even more involved for $A+\lambda\cdot A$ with 
$\lambda=4$ or greater.

The main result of this paper is an almost sharp lower bound on any sum of
dilates of the form~\eqref{genericdilatesum}. Instead of sharp $O(1)$
error term of theorem~\ref{thmAthreeA}, it has the weaker $o(\abs{A})$ 
error term. 
\begin{theorem}\label{mainthm}
For every vector $\blambda=(\lambda_1,\dotsc,\lambda_k)\in \Z^k$ of $k$ coprime integers we have
\begin{equation*}
\abs{\lambda_1\cdot A+\dotsb+\lambda_k\cdot A}\geq (\abs{\lambda_1}+\dotsb+\abs{\lambda_k})\abs{A}-o(|A|)
\end{equation*}
for every finite set $A\subset \Z$ with the error term $o(|A|)$ depending on $\blambda$ only.
\end{theorem}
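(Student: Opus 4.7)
The plan is to argue by contradiction, reducing to sets lying in a short integer interval and then analysing that structured case directly. Set $L = \abs{\lambda_1} + \dotsb + \abs{\lambda_k}$, and suppose for contradiction that some fixed $\epsilon > 0$ admits arbitrarily large finite $A \subset \Z$ with $\abs{\lambda_1 \cdot A + \dotsb + \lambda_k \cdot A} \leq (L - \epsilon)\abs{A}$.

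First, I would show that $A$ has bounded doubling. Using Pl\"unnecke's inequality (or, more efficiently, the Pl\"unnecke-type upper bound for sums of dilates announced in the abstract), the assumed smallness of the sum of dilates forces $\abs{A + A} \leq K\abs{A}$ for some $K = K(\blambda, \epsilon)$. The mechanism is that each $\lambda \cdot A$ sits inside the $\abs{\lambda}$-fold iterated sumset of $A$, so if $\abs{A + A}$ were large, the iterated sumset---and hence the sum of dilates---would be large as well.

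Next, I would invoke Freiman's theorem: $A$ embeds in a generalised arithmetic progression $P$ of bounded dimension $d$ and volume $\abs{P} \leq C\abs{A}$. Passing to a Freiman $L$-isomorphism, which preserves the equation $\sum \lambda_i a_i = \sum \lambda_i b_i$ and therefore the cardinality of $\lambda_1 \cdot A + \dotsb + \lambda_k \cdot A$, I may replace $A$ by its image in $\Z^d$. Higher-dimensional GAPs carry a large sum of dilates: for $A \subset \Z^d$ with $d \geq 2$, a Brunn--Minkowski-type estimate yields a bound of order $L^d \abs{A}$, vastly exceeding $(L - \epsilon)\abs{A}$. Hence the dimension must collapse to $d = 1$, and I may assume $A \subset \{0, 1, \dotsc, M\}$ with $M \leq C\abs{A}$.

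It remains to prove the lower bound for dense subsets of intervals, which will be the main obstacle. My approach would be to partition the host interval containing the sumset into $L$ layers of length $M$ and, in each layer, exhibit a nearly-disjoint translate of $A$ (or of a dilate of $A$) of size $\abs{A} - o(\abs{A})$, whose union then contributes $L\abs{A} - o(\abs{A})$ distinct sums. Coprimality of the $\lambda_i$ should enter via a Chicken--McNugget-type argument that forces each layer to be filled to density $1 - o(1)$. The delicate point is matching the exact coefficient $L$ rather than merely $\Omega(L)$: one has to quantify both the density within each layer and the near-disjointness across layers, which is what makes the sharpness of the constant in the theorem nontrivial.
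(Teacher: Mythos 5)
Your opening reduction is fine in substance: from $\abs{S_{\blambda}(A)}\leq (L-\epsilon)\abs{A}$ with $k\geq 2$ one does get bounded doubling, e.g.\ via the sum form of Ruzsa's triangle inequality, $\abs{A+A}=\abs{\lambda_1\cdot A+\lambda_1\cdot A}\leq \abs{\lambda_1\cdot A+\lambda_2\cdot A}^2/\abs{A}$ (the mechanism you actually describe, containment of dilates in iterated sumsets, only gives upper bounds and does not yield the implication as you state it). The first genuine gap is the dimension-collapse step. After Freiman's theorem you only know that $A$ is Freiman-isomorphic to a subset of bounded density of a box $[1,t_1]\times\dotsb\times[1,t_d]$ with $d=d(K)$ typically at least $2$, and the claimed Brunn--Minkowski-type bound of order $L^d\abs{A}$ for such sets is false: if the box is thin in all but one direction, the set $A=[1,t_1]\times\{0,1\}$ (two long parallel progressions) has $\abs{S_{\blambda}(A)}=O_k(L\abs{A})$, nowhere near $L^2\abs{A}$. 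Even the weaker statement you actually need --- that $d\geq 2$ automatically forces $\abs{S_{\blambda}(A)}>(L-\epsilon)\abs{A}$ --- is essentially the theorem itself, because the one-dimensional fibers of a dense subset of a box need not be dense; coping with exactly this is the purpose of the recursive partition in lemma~\ref{partitionlemma} (the dense/growing/small/tiny families). So you cannot simply assume $d=1$ and $A\subset\{0,\dotsc,C\abs{A}\}$.

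The second and larger gap is the dense one-dimensional case, which you leave as a hope rather than an argument, and whose guiding picture is incorrect. It is not true that the sumset fills $L$ layers of length $M$ to density $1-o(1)$: for $\blambda=(1,3)$ and $A$ the even numbers in $[1,n]$ together with one odd element, $A+3\cdot A$ has density roughly $\tfrac12$ in typical windows of length $n$ (the sumset is large because it spreads over residue classes and windows, not because windows fill up); and if $A$ is a union of two short intervals placed far apart inside $[1,C\abs{A}]$, most layers of length $M$ meet the sumset in far fewer than $\abs{A}$ elements, with the count recovered elsewhere. Coprimality cannot be harvested by a Chicken--McNugget count of attainable layers; in the paper it enters through the residue classes modulo $\tau_i=\gcd(\blambda^i)$, and the engine of the proof is the dichotomy of lemma~\ref{inductlemma} --- either a marking argument gives the bound by induction on $k$ (with $\sum_i\lambda_i=0$ handled separately via corollary~\ref{fibercorollary}), or some residue class is confined to a shorter interval, producing a density increment --- iterated through the quantities $M$ and $M(\gamma)$ of claims~\ref{firstclaim} and~\ref{secondclaim}. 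None of this machinery, nor a substitute for it, appears in your outline, and the step you defer is precisely where the sharp constant $\norm{\blambda}_1$ is won.
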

The case when $\lambda_1,\dotsc,\lambda_k$ are not coprime can be
reduced to the case when they are coprime via the relation
\begin{equation*}
\lambda_1\cdot A+\dotsb+\lambda_k\cdot A=\gcd(\lambda_1,\dotsc,\lambda_k)\cdot
\left(\frac{\lambda_1}{\gcd(\lambda_1,\dotsc,\lambda_k)}\cdot A+\dotsb+\frac{\lambda_k}{\gcd(\lambda_1,\dotsc,\lambda_k)}\cdot A\right).
\end{equation*}
Theorem~\ref{mainthm} is sharp apart from the $o(\abs{A})$ term as
witnessed by $A=\{1,\dotsc,n\}$. 

The problem of bounding~\eqref{genericdilatesum} can be seen as a special
case of the problem of establishing inequalities between two or more
sums of the form \eqref{genericdilatesum}. For instance, if $\abs{A+A}$
is small, how small does $A+\lambda\cdot A$ need to be? Since
$A+\lambda\cdot A\subset \underbrace{A+\dotsb+A}_{\lambda+1\text{ times}}$
for positive integer $\lambda$, the classical Pl\"unnecke inequality
\cite[Corollary~5.2]{cite:ruzsa_plunnecke} tells us that 
$\abs{A+A}\leq K\abs{A}$ implies $\abs{A+\lambda\cdot A}\leq K^{\lambda+1}\abs{A}$.
This estimate is far from being sharp.
\begin{theorem}\label{gentriagineq}
If either $\abs{A+A}\leq K\abs{A}$ or $\abs{A-A}\leq K\abs{A}$, then $\abs{\lambda_1\cdot A+\dotsb+\lambda_k\cdot A}\leq K^p\abs{A}$ where 
\begin{equation*}
p=7+12\sum_{i=1}^k \log_2(1+\abs{\lambda_i}).
\end{equation*}
\end{theorem}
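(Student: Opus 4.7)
The plan is to reduce the estimate for $\abs{\lambda_1\cdot A+\dotsb+\lambda_k\cdot A}$ to a product of two-term sumset bounds $\abs{A+\lambda_i\cdot A}$ via the Pl\"unnecke--Ruzsa inequality for sums of many sets, and then to bound each two-term sum $\abs{A+\lambda\cdot A}$ by a power of $K$ whose exponent is only logarithmic in $\abs\lambda$.

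Assuming $\abs{A+A}\le K\abs A$, I will prove by induction on $\abs\lambda$ that $\abs{A+\lambda\cdot A}\le K^{g(\lambda)}\abs A$ with $g(\lambda)=O(\log(1+\abs\lambda))$. The inductive step halves $\abs\lambda$ using Ruzsa's triangle inequality $\abs{X-Y}\cdot\abs Z\le\abs{X-Z}\cdot\abs{Y-Z}$. For even $\lambda=2m$, taking $X=A$, $Y=-\lambda\cdot A$, $Z=-m\cdot A$ gives $\abs{A+2m\cdot A}\cdot\abs A\le\abs{A+m\cdot A}\cdot\abs{m\cdot A-2m\cdot A}$, and the last factor equals $\abs{m\cdot(A-2\cdot A)}=\abs{A-2\cdot A}\le K^3\abs A$ by Pl\"unnecke's inequality, yielding $g(2m)\le g(m)+3$. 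For odd $\lambda=2m+1$, the diagonal inclusion $(2m+1)\cdot A\subseteq A+2m\cdot A$ combined with the Ruzsa sum inequality $\abs{X+Y}\le\abs{X-W}\cdot\abs{Y+W}/\abs W$ (with $W=A$) reduces the odd case to the even case at the cost of an additional factor of $K^3$.

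With these two-term bounds in hand, the Pl\"unnecke--Ruzsa inequality --- if $\abs{A+B_i}\le K_i\abs A$ for $i=1,\dotsc,k$, then $\abs{B_1+\dotsb+B_k}\le K_1\dotsm K_k\cdot\abs A$ --- applied with $B_i=\lambda_i\cdot A$ yields
\[
\abs{\lambda_1\cdot A+\dotsb+\lambda_k\cdot A}\le K^{\sum_i g(\lambda_i)}\abs A,
\]
with total exponent $O\bigl(\sum_i\log_2(1+\abs{\lambda_i})\bigr)$. If only $\abs{A-A}\le K\abs A$ is assumed, Ruzsa's inequality $\abs{A+A}\cdot\abs A\le\abs{A-A}^2$ gives $\abs{A+A}\le K^2\abs A$, after which the same argument applies with $K$ replaced by $K^2$, at most doubling the final exponent.

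The main obstacle is the tight bookkeeping of constants. The odd step of the halving induction is the binding one, costing $K^6$ per halving and contributing $6\log_2\abs\lambda$; this is doubled to $12\log_2\abs\lambda$ by the substitution $K\mapsto K^2$ in the difference-set case, accounting for the coefficient $12$ in $p$. The additive constant $7$ absorbs the base cases $\lambda\in\{0,\pm1\}$ and the slack introduced by the Pl\"unnecke--Ruzsa combination step.
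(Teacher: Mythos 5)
Your proposal is essentially correct, but it takes a genuinely different route from the paper. The paper expands each $\lambda_i$ in binary, includes $\lambda_1\cdot A+\dotsb+\lambda_k\cdot A$ in a sum of repeated sums of the dyadic dilates $2^j\cdot A$, and collapses the scales from the top down with Ruzsa's covering lemma (lemma~\ref{coveringlemma}), Pl\"unnecke's inequality entering only to bound the number of translates; negative coefficients are handled at the very end by splitting into positive and negative parts and gluing with the sum form of the triangle inequality (lemma~\ref{ruzsatriag}). You instead bound each two-term sum $\abs{A+\lambda\cdot A}$ by a halving recursion (cost $K^{3}$ per halving and $K^{6}$ at odd steps; both steps check out, since $\abs{2\cdot A-A}\leq\abs{A+A-A}\leq K^{3}\abs{A}$ and $\abs{A+A+A}\leq K^{3}\abs{A}$), and then merge the $k$ dilates in one stroke via the Pl\"unnecke--Ruzsa inequality for distinct summands $B_i=\lambda_i\cdot A$; that merging step is a stronger off-the-shelf theorem than anything the paper invokes (it is usually stated with a subset $X\subseteq A$ on the left, from which the form you quote follows at once), but it is standard, so there is no gap there. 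What each approach buys: yours is shorter and conceptually cleaner; the paper's is self-contained on weaker tools and in fact produces an exponent governed by the binary digit sums of the $\lambda_i$, which can be much smaller than $\log_2(1+\abs{\lambda_i})$, as the paper remarks after the proof. One bookkeeping correction: the base cases of your recursion are paid once per $\lambda_i$, so for large $k$ they cannot all be hidden in the single additive constant $7$; they must instead be absorbed into the per-term budget, which does work because $\log_2(1+\abs{\lambda_i})\geq 1$ --- indeed your $g$ satisfies $g(\lambda)\leq 3\lfloor\log_2\abs{\lambda}\rfloor+3(\text{number of odd steps})+2\leq 6\log_2(1+\abs{\lambda})$, so even after your doubling the total exponent is at most $12\sum_i\log_2(1+\abs{\lambda_i})\leq p$. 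Also, the doubling in the difference-set case is unnecessary: every use you make of the hypothesis is a bound on $\abs{A\pm A}$ or $\abs{A\pm A\pm A}$, and Pl\"unnecke's inequality as quoted in the paper yields these with the same $K$ under either hypothesis.
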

The logarithmic dependence on $\lambda_i$ is optimal, as seen by
considering $A+\lambda\cdot A$ with $A=\{1,\dotsc,n\}$.
The constants $7$ and $12$ are certainly not optimal, and the dependence
on $k$ is probably not optimal.

Theorem~\ref{gentriagineq} allows one to strengthen the main result of
\cite{cite:ruzsatriv} to
\begin{theorem}
For a symmetric linear equation $\lambda_1 x_1+\dotsb+\lambda_k x_k=
\lambda_1 y_1+\dotsb+\lambda_k y_k$ let $R(N)$ be the size of the 
largest $A\subset \{1,\dotsc,N\}$ not containing a solution to the equation
in distinct integers. Then if $k\geq 3$ and $\lambda_i\neq 0$ for all $1\leq i\leq k$,
then
\begin{equation*}
R(N)=O\bigl(N^{\frac{1}{2}-\frac{1}{c(k)\log\norm{\blambda}_1}}\bigr).
\end{equation*}
\end{theorem}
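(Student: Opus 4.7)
The strategy is to repeat the argument of \cite{cite:ruzsatriv} essentially verbatim, substituting Theorem~\ref{gentriagineq} for the single invocation of Pl\"unnecke's inequality that appears in it. The main theorem of \cite{cite:ruzsatriv} is the same statement with $\norm{\blambda}_1$ in place of $\log\norm{\blambda}_1$, and the source of the $\norm{\blambda}_1$ there is precisely the bound on $\abs{\lambda_1\cdot A+\dotsb+\lambda_k\cdot A}$ obtained from Pl\"unnecke via the trivial inclusion $\lambda\cdot A\subseteq\underbrace{A+\dotsb+A}_{\abs{\lambda}\text{ times}}$, which forces an exponent linear in $\sum\abs{\lambda_i}$. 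Theorem~\ref{gentriagineq} replaces this exponent with $p=O\bigl(\sum \log(1+\abs{\lambda_i})\bigr)$, and the improvement feeds directly into the final bound on $R(N)$.

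More concretely, the plan proceeds as follows. Suppose $A\subseteq\{1,\dotsc,N\}$ has $\abs{A}=R(N)$ and contains no solution in distinct integers. A Balog--Szemer\'edi--Gowers/doubling-refinement step, exactly as performed in \cite{cite:ruzsatriv}, produces a large subset $A'\subseteq A$ with $\abs{A'+A'}\leq K\abs{A'}$ for a parameter $K$ to be optimised later. The symmetric-equation hypothesis forces the dilate sum $\lambda_1\cdot A'+\dotsb+\lambda_k\cdot A'$ to have many pairwise distinct elements, which, combined with its containment in an interval of length $O(\norm{\blambda}_1 N)$, gives a lower bound on $N$ in terms of $\abs{A'}$ and $\norm{\blambda}_1$. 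Theorem~\ref{gentriagineq} supplies the competing upper bound $\abs{\lambda_1\cdot A'+\dotsb+\lambda_k\cdot A'}\leq K^{p}\abs{A'}$ with $p=O(\log\norm{\blambda}_1)$ instead of the Pl\"unnecke value $O(\norm{\blambda}_1)$. Balancing the two bounds and optimising $K$ as a small power of $\abs{A}$ yields the exponent $\tfrac{1}{2}-1/(c(k)\log\norm{\blambda}_1)$.

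The main obstacle is not conceptual but bookkeeping: one must check that every appeal to a Pl\"unnecke-type inequality in \cite{cite:ruzsatriv} is made to a set with controlled doubling (or difference-doubling), so that the hypothesis of Theorem~\ref{gentriagineq} is satisfied, and then redo the optimisation of the parameter $K$ with the new, tighter exponent. Since the remainder of the argument of \cite{cite:ruzsatriv} is insensitive to the difference between $\norm{\blambda}_1$ and $\log\norm{\blambda}_1$, the entire saving in the final exponent arises from this single substitution, and no additional combinatorial input is required.
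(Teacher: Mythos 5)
Your proposal matches the paper's own argument exactly: the paper proves this theorem precisely by rerunning the proof of Theorem~1 of \cite{cite:ruzsatriv} with the single invocation of Pl\"unnecke's inequality replaced by theorem~\ref{gentriagineq}, so that the exponent $O(\norm{\blambda}_1)$ becomes $O(\log\norm{\blambda}_1)$ and propagates to the bound on $R(N)$. Your additional sketch of the internal structure of that argument is a reasonable reconstruction, but the substance of the proof is the same substitution the paper describes.
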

The proof can be obtained from the proof of 
Theorem~1 in \cite{cite:ruzsatriv}
by replacing invocation of Pl\"unnecke's inequality with
an application of theorem~\ref{gentriagineq}.

The rest of the paper is split into three sections. 
In the first section we gather the tools that we need 
from combinatorial number theory. The bulk 
of the paper is in the second section, that contains
the proofs of theorems~\ref{thmAthreeA} and~\ref{mainthm}
about the lower bounds on sums of dilates.
The final section contains the proof of the Pl\"unnecke-type 
theorem~\ref{gentriagineq}.

\section{Notation and tools}
\begin{lemma}[Sum form of Ruzsa's triangle inequality \cite{cite:ruzsa_plunnecke}, Corollary~6.2]
\label{ruzsatriag}
For any finite $A,B,C\subset \Z$ we have
\begin{equation*}
\abs{A+C}\leq\frac{\abs{A+B}\abs{B+C}}{\abs{B}}.
\end{equation*}

\end{lemma}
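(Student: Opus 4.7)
My plan is to deduce the lemma from Pl\"unnecke's inequality via a single trivial inclusion. First I would observe that for any fixed $b_0\in B$ the translate $A+C+b_0$ lies inside $A+B+C$, and translation preserves cardinality, so $\abs{A+C}=\abs{A+C+b_0}\le\abs{A+B+C}$. It thus suffices to establish the stronger bound $\abs{A+B+C}\le\abs{A+B}\cdot\abs{B+C}/\abs{B}$.

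This stronger bound is precisely the two-summand form of Pl\"unnecke's inequality: for finite subsets $X,Y_1,Y_2$ of any abelian group, $\abs{X+Y_1+Y_2}\le\abs{X+Y_1}\cdot\abs{X+Y_2}/\abs{X}$. Applied with $X=B$, $Y_1=A$, $Y_2=C$ (and using commutativity of $+$) this gives exactly what is needed. The Pl\"unnecke bound in this form is standard, provable by Ruzsa's commutative graph argument or by Petridis's more recent elementary proof, so I would simply quote it rather than reprove it.

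I would also flag why a tempting direct argument fails. Define $\phi\colon B\times(A+C)\to(A+B)\times(B+C)$ by $\phi(b,y)=(a_y+b,\,b+c_y)$, where $y=a_y+c_y$ is a fixed representation for each $y\in A+C$. A short calculation shows that a collision $\phi(b_1,y_1)=\phi(b_2,y_2)$ is equivalent to the pair of equations $a_{y_1}-c_{y_1}=a_{y_2}-c_{y_2}$ and $y_1-y_2=2(b_2-b_1)$, and such collisions genuinely occur: for instance, with $A=B=C=\{0,2\}$ one has $\phi(2,0)=\phi(0,4)=(2,2)$ no matter how the representations are chosen. This stands in contrast to the difference analogue $\abs{A-C}\le\abs{A-B}\cdot\abs{B-C}/\abs{B}$, whose natural map $(b,y)\mapsto(a_y-b,\,b-c_y)$ is injective because the sum of the two coordinates recovers $y=a_y-c_y$. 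That asymmetry between sum and difference forms is the main obstacle, and the reason the sum form has to be routed through Pl\"unnecke rather than proved by a one-line bijection.
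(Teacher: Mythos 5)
Your reduction to a three-fold sum is fine, but the inequality you then invoke as ``the two-summand form of Pl\"unnecke's inequality'', namely $\abs{X+Y_1+Y_2}\leq \abs{X+Y_1}\,\abs{X+Y_2}/\abs{X}$ for arbitrary finite sets, is not a theorem: it is false in general. Take $M$ huge and, in $\Z$, let $X=\{0,1,M,M+1,100\}$, $Y_1=\{0,1\}$, $Y_2=\{0,M\}$ (this is the image of the planar example $X=\{0,1\}^2\cup\{(100,0)\}$, $Y_1=\{(0,0),(1,0)\}$, $Y_2=\{(0,0),(0,1)\}$ under $(x,y)\mapsto x+My$). Then $\abs{X}=5$, $\abs{X+Y_1}=\abs{X+Y_2}=8$, but $\abs{X+Y_1+Y_2}=13$, and $13\cdot 5=65>64=8\cdot 8$. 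What the Pl\"unnecke--Ruzsa method (or Petridis's argument) actually yields for different summands is only a subset form: there is a nonempty $X'\subseteq X$ with $\abs{X'+Y_1+Y_2}\leq \abs{X+Y_1}\,\abs{X+Y_2}/\abs{X}$ (choose $X'\subseteq X$ minimizing $\abs{X'+Y_1}/\abs{X'}$ and apply Petridis's lemma with $C=Y_2$); one cannot in general replace $X'$ by $X$, and that is exactly the gap in your argument.

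The repair is short. Apply the subset form with $X=B$, $Y_1=A$, $Y_2=C$ to get a nonempty $B'\subseteq B$ satisfying $\abs{B'+A+C}\leq \abs{A+B}\,\abs{B+C}/\abs{B}$, and then note $\abs{A+C}\leq\abs{B'+A+C}$ since $B'$ is nonempty --- this replaces your translation step and completes the proof. This is in effect how the sum form of the triangle inequality is obtained in the source the paper cites; the paper itself does not reprove the lemma but quotes it as Corollary~6.2 of the reference for Pl\"unnecke's inequality, so any self-contained derivation must route through the subset version (or through Pl\"unnecke's graph machinery), not through the full-set inequality you assumed. Your closing discussion of why the naive map $(b,y)\mapsto(a_y+b,\,b+c_y)$ fails to be injective is correct, but it is orthogonal to this issue.
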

\begin{corollary}\label{triangcor}
For any finite sets $A,B\subset\Z$
\begin{equation*}
\abs{A+B}\geq \sqrt{\abs{A+A}\abs{B}}.
\end{equation*}
\end{corollary}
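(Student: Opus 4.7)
The plan is to apply Lemma \ref{ruzsatriag} directly with a substitution that makes the set $A+A$ appear on the left-hand side. Specifically, I would take the role of ``$A$'' in the triangle inequality to be $A$, the role of ``$B$'' to be $B$, and the role of ``$C$'' also to be $A$. The inequality then reads $\abs{A+A}\leq \abs{A+B}\cdot\abs{B+A}/\abs{B}$. Since addition in $\Z$ is commutative we have $\abs{A+B}=\abs{B+A}$, so after rearranging this becomes $\abs{A+B}^2\geq \abs{A+A}\cdot\abs{B}$, and taking square roots yields the claim.

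There is essentially no obstacle: the corollary is a one-line specialization of Lemma \ref{ruzsatriag}, and the only ``choice'' in the argument is deciding which of the three slots in the triangle inequality to collapse. Conceptually, the bound can be read as saying that $B$ cannot sit next to $A$ in a more compressed way than a second copy of $A$ would, measured by the doubling constant $\abs{A+A}/\abs{A}$; this is exactly the symmetry one expects from the Ruzsa-distance viewpoint from which Lemma \ref{ruzsatriag} arises.
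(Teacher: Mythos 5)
Your proof is correct and is exactly the intended one-line specialization of Lemma~\ref{ruzsatriag} (take $B$ as the middle set and $C=A$, then use $\abs{A+B}=\abs{B+A}$); the paper leaves this step implicit, and your argument matches it.
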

In proving theorem~\ref{gentriagineq}, 
besides the operation of forming a dilate $\lambda\cdot A$
we will also make use of the operation of repeated addition
$\lambda*A=\{a_1+\dotsb+a_{\lambda} : a_1,\dotsc,a_{\lambda}\in A\}$.
Moreover, we will need to be able to bound the size of sums 
of the form $\lambda_1*A+\lambda_2*A$ from above.
\begin{lemma}[Pl\"unnecke's inequality]
If $\abs{A+A}\leq K\abs{A}$ or $\abs{A-A}\leq K\abs{A}$, then
$\abs{\lambda_1*A-\lambda_2*A}\leq K^{\lambda_1+\lambda_2}\abs{A}$
for all non-negative integers $\lambda_1,\lambda_2$.
\end{lemma}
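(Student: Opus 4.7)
The plan is to prove this version of Pl\"unnecke's inequality via the Petridis method. I would first reduce both hypotheses to the single statement: if $\abs{A + B} \leq K \abs{A}$, then $\abs{\lambda_1 * B - \lambda_2 * B} \leq K^{\lambda_1 + \lambda_2}\abs{A}$. Taking $B = A$ handles the sum-set hypothesis; taking $B = -A$ handles the difference-set hypothesis, since $\abs{A+(-A)} = \abs{A-A}$ and $\abs{\lambda_1 * (-A) - \lambda_2 * (-A)} = \abs{\lambda_1 * A - \lambda_2 * A}$.

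Choose a nonempty $X \subseteq A$ minimizing $K' := \abs{X + B}/\abs{X}$; by taking $X = A$ we have $K' \leq K$. The heart of the argument is the Petridis lemma: for every finite $C$,
\[\abs{X + B + C} \leq K' \abs{X + C}.\]
This is proven by induction on $\abs{C}$; the base case is the definition of $K'$. For the step, write $C = C' \cup \{c\}$ with $c\notin C'$, set $Y = \{x \in X : x + c \in X + C'\}$, observe that $Y + c + B \subseteq X + C' + B$, and bound
\[\abs{(X + B + c) \setminus (X + B + C')} \leq \abs{(X + B) \setminus (Y + B)} = \abs{X + B} - \abs{Y + B} \leq K'\bigl(\abs{X} - \abs{Y}\bigr),\]
where the final inequality applies minimality of $K'$ to the (possibly empty) subset $Y$ of $A$. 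Combined with the identity $\abs{X + C} = \abs{X + C'} + \abs{X \setminus Y}$ (coming from $(X + c) \setminus (X + C') = (X \setminus Y) + c$) and the inductive hypothesis, this yields the inductive step. Iterating gives $\abs{X + \lambda * B + C} \leq (K')^\lambda \abs{X + C}$ for every $\lambda \geq 0$, and in particular $\abs{X + \lambda * B} \leq (K')^\lambda \abs{X}$.

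To conclude, I would use the elementary ``sum-form'' triangle inequality
\[\abs{U - V}\cdot\abs{X} \leq \abs{X + U}\cdot\abs{X + V},\]
established by the injection $(d, x) \mapsto (x + u_d, x + v_d)$ from $(U - V) \times X$ into $(X + U) \times (X + V)$, where for each $d \in U - V$ we fix once and for all a representation $d = u_d - v_d$ (the difference of the two image coordinates recovers $d$, whence $u_d,v_d$, whence $x$). Applying this with $U = \lambda_1 * B$ and $V = \lambda_2 * B$ gives
\[\abs{\lambda_1 * B - \lambda_2 * B}\cdot\abs{X} \leq \abs{X + \lambda_1 * B}\cdot\abs{X + \lambda_2 * B} \leq (K')^{\lambda_1 + \lambda_2}\abs{X}^2,\]
and dividing by $\abs{X}$ and using $K'\leq K$, $\abs{X}\leq\abs{A}$ finishes the proof. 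The main obstacle is the Petridis lemma: the clever choice of $Y$ and the careful double-counting that makes the minimality of $X$ push in the correct direction. Once that lemma is in hand, the iteration and the triangle step are purely formal.
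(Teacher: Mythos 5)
Your proof is correct, but note that the paper does not prove this lemma at all: it is quoted as a classical black-box result (the Pl\"unnecke--Ruzsa inequality, with a reference to Ruzsa's treatment), since only its statement is needed in the proof of Theorem~\ref{gentriagineq}. What you supply is the modern self-contained derivation: Petridis's argument (choose $X\subseteq A$ minimizing $\abs{X+B}/\abs{X}$, prove $\abs{X+B+C}\leq K'\abs{X+C}$ by induction on $\abs{C}$ via the set $Y=\{x\in X: x+c\in X+C'\}$, iterate to get $\abs{X+\lambda*B}\leq (K')^{\lambda}\abs{X}$), followed by the Ruzsa triangle inequality $\abs{U-V}\abs{X}\leq\abs{X+U}\abs{X+V}$ to convert the one-sided bounds into a bound on $\abs{\lambda_1*B-\lambda_2*B}$. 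The details check out: the identity $(X+c)\setminus(X+C')=(X\setminus Y)+c$, the use of minimality on the possibly empty $Y$, and the injection proving the triangle inequality are all sound, and your reduction $B=A$ versus $B=-A$ cleanly covers both hypotheses $\abs{A+A}\leq K\abs{A}$ and $\abs{A-A}\leq K\abs{A}$, since $\abs{\lambda_1*(-A)-\lambda_2*(-A)}=\abs{\lambda_1*A-\lambda_2*A}$. Compared with the paper's citation, your route buys self-containedness and makes explicit why the exponent is exactly $\lambda_1+\lambda_2$ with the mixed-sign sumset, at the cost of reproving a standard result; it is also marginally stronger than needed, as you get the bound with $\abs{X}\leq\abs{A}$ and $K'\leq K$.
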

\begin{lemma}[Ruzsa's covering lemma, \cite{cite:ruzsa_covering}]\label{coveringlemma}
For any non-empty set $A,B$ in abelian group $G$ one can cover
$B$ by $\frac{\abs{A+B}}{\abs{A}}$ translates of $A-A$.
\end{lemma}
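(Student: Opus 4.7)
The plan is to use a greedy packing argument, which is the standard way to turn packing bounds into covering bounds.

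First, I would choose a maximal subset $X \subseteq B$ with the property that the translates $\{A+x : x\in X\}$ are pairwise disjoint. Since these translates live inside $A+B$ and have size $|A|$ each, disjointness gives $|A|\cdot|X|=|A+X|\leq |A+B|$, hence $|X|\leq |A+B|/|A|$.

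Next, I would exploit maximality to turn $X$ into a covering. For any $b\in B$, if $b\notin X$, then by maximality $A+b$ must meet $A+x$ for some $x\in X$, since otherwise $X\cup\{b\}$ would still have the disjointness property. An intersection $(A+b)\cap(A+x)\neq\emptyset$ is equivalent to $b-x\in A-A$, i.e.\ $b\in x+(A-A)$. Trivially this also holds when $b\in X$, because $0\in A-A$ whenever $A$ is nonempty. Therefore $B\subseteq X+(A-A)$, which exhibits $B$ as a union of $|X|\leq |A+B|/|A|$ translates of $A-A$, as claimed.

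The proof has no real obstacle: the one small point to be careful about is that $|X|$ need not be an integer bound given by $|A+B|/|A|$ exactly, but since $|X|$ is an integer and $|X|\leq |A+B|/|A|$, it is bounded by $\lfloor |A+B|/|A|\rfloor$; the statement as phrased in the paper (``$|A+B|/|A|$ translates'') is the standard loose formulation and causes no issue. Note also the mild convention that $A$ must be nonempty in order for the division by $|A|$ to make sense and for the step $0\in A-A$ to hold.
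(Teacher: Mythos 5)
Your argument is correct and is exactly the standard greedy/maximality proof of Ruzsa's covering lemma; the paper itself states this lemma with a citation rather than proving it, and the cited proof proceeds the same way (maximal disjoint family of translates $A+x$, $x\in B$, then maximality gives $B\subseteq X+(A-A)$). No gaps.
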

\begin{definition}
Let $G_1,G_2$ be abelian groups.
We say that $A_1\subset G_1$ and $A_2\subset G_2$ are \emph{$r$-isomorphic}
if there is a bijection $\phi\colon A_1\to A_2$ satisfying
\begin{equation*}
a_1+\dotsb+a_r=b_1+\dotsb+b_r\iff \phi(a_1)+\dotsb+\phi(a_r)=\phi(b_1)+\dotsb+\phi(b_r)
\end{equation*}
for all $a_1,\dotsb,a_r,b_1,\dotsc,b_r\in A_1$. The map $\phi$
is called a Freiman isomorphism of order~$r$.
\end{definition}
We will need the following version of Freiman's theorem which
can be deduced by a similar argument to the standard
Freiman's theorem.
\begin{theorem}[\cite{cite:bilu_freiman}, Theorem~1.2]\label{freimanthm}
Fix $r\in \N$. Suppose a non-empty set 
$A\subset \Z$ satisfies $\abs{A+A}\leq K\abs{A}$.
Then $A$ is $r$-isomorphic to a subset of $[1,t_1]\times\dotsb\times[1,t_d]\subset
\Z^d$ of density at least $\alpha>0$, where $d$ and $\alpha$
depend only on $K$ and $r$, but not on $A$.
\end{theorem}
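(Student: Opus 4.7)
The plan is to follow the Ruzsa--Freiman modeling scheme, but enforcing Freiman $r$-isomorphism throughout in place of the usual $2$-isomorphism. First, I apply Pl\"unnecke's inequality to the hypothesis $\abs{A+A}\leq K\abs{A}$ to deduce $\abs{r*A-r*A}\leq K^{2r}\abs{A}$. Next, I invoke Ruzsa's modeling lemma: choosing a prime $N$ with $\abs{r*A-r*A}\leq N\leq r\abs{r*A-r*A}$ and applying a pigeonhole argument over dilates $x\cdot A\bmod N$, one locates an $x$ and a subset $A''\subset A$ of size at least $\abs{A}/r$ on which the reduction map is a Freiman $r$-isomorphism onto a set $A'\subset\Z/N\Z$. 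The key identity here is that a group homomorphism is an $r$-isomorphism on $A''$ precisely when it is injective on $r*A''-r*A''$, so the bound on $\abs{r*A-r*A}$ exactly matches the hypothesis needed.

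After this step the problem reduces to modeling a set $A'\subset\Z/N\Z$ with $\abs{A'}\geq\beta\abs{A}$ and $N\leq C\abs{A}$ (both constants depending only on $K$ and $r$) as a positive-density subset of a box in $\Z^d$. By Bogolyubov's lemma applied to $A'$, the iterated sumset $2r*A'-2r*A'$ contains a Bohr set $B(\Gamma,\delta)\subset\Z/N\Z$ with $\abs{\Gamma}=d$ and $\delta>0$ depending only on $\beta$ and $r$. Through the character embedding $x\mapsto(\chi_1(x)/N,\dotsc,\chi_d(x)/N)\in(\R/\Z)^d$, the Bohr set is identified with the preimage of a small symmetric box, and applying Minkowski's second theorem to the integer-point lattice attached to this box produces a basis with controlled successive minima. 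This basis defines a group homomorphism $\phi$ from a translate of $A'$ into $\Z^d$, and the inclusion $B(\Gamma,\delta)\subset 2r*A'-2r*A'$ is exactly what allows one to verify that $\phi$ is injective on $r*A'-r*A'$; hence $\phi$ is a Freiman $r$-isomorphism on $A'$. The image lies in a box $[1,t_1]\times\dotsb\times[1,t_d]$ with $\prod t_i\leq C'\abs{A}$ for a constant $C'=C'(K,r)$, which gives the density bound $\alpha=\beta/C'>0$.

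The main obstacle is propagating the $r$-dependence faithfully: the lattice-based Freiman homomorphism must be injective on the large set $r*A'-r*A'$ rather than on $A'-A'$ as in the proof of the standard Freiman theorem, which is why Bogolyubov's lemma must be applied to the even larger sumset $2r*A'-2r*A'$ and why each quantity produced by the geometry-of-numbers step (the Bohr radius, the successive minima, the resulting density) must be tracked as it degrades with $r$. Once this bookkeeping is accepted, the proof is the familiar one for Freiman's theorem with the parameter $2$ replaced by $r$, which is the sense in which the excerpt asserts that the theorem follows by a similar argument.
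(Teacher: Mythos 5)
The paper offers no argument for this statement---it is quoted directly from Bilu's Theorem~1.2---so your proposal has to stand on its own, and as written it has two genuine gaps. First, Ruzsa's modeling lemma only hands you a subset $A''\subset A$ with $\abs{A''}\geq \abs{A}/r$ that is $r$-isomorphic to a subset $A'$ of $\Z/N\Z$; everything you do afterwards concerns $A'$, i.e.\ at best a positive-proportion subset of $A$, whereas the theorem is about all of $A$. Saying the problem ``reduces'' to modeling $A'$ is exactly the false move. The missing return trip is the heart of every proof of this statement: pull the structured object (the large proper progression found inside $2A'-2A'$ via Bogolyubov and Minkowski) back to $\Z$ through the Freiman isomorphism, use Ruzsa's covering lemma (from $\abs{A+A''}\leq K\abs{A}\leq rK\abs{A''}$) to cover all of $A$ by boundedly many translates of it, assemble the result into a generalized arithmetic progression of bounded rank and size $O_{K,r}(\abs{A})$ containing $A$, and then---crucially---embed that progression into an $r$-proper one of comparable rank and size. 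Only $r$-properness makes the obvious parametrization of the progression by a box $[1,t_1]\times\dotsb\times[1,t_d]$ a Freiman isomorphism of order $r$ on everything inside it, and this is where the dependence of $d$ and $\alpha$ on $r$ genuinely enters.

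Second, the claim that $B(\Gamma,\delta)\subset 2r*A'-2r*A'$ ``is exactly what allows one to verify that $\phi$ is injective on $r*A'-r*A'$'' does not hold up: that inclusion supplies many elements of the iterated difference set on which all $d$ characters are small, and in the standard argument it is used to locate a large progression \emph{inside} $2A'-2A'$; it provides no separation of points of $r*A'-r*A'$ under the character map, and injectivity/properness must be arranged separately (precisely by the geometry-of-numbers embedding into an $r$-proper progression just described). A smaller slip in the same direction: a group homomorphism restricted to $A''$ is an $r$-isomorphism onto its image iff its kernel meets $r*A''-r*A''$ only in $0$ (equivalently, it is injective on $r*A''$), not iff it is injective on the set $r*A''-r*A''$, which is a stronger condition. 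So your skeleton (Pl\"unnecke with exponent $2r$, modeling with parameter $r$, Bogolyubov, Minkowski) is the right machinery, but without the covering step back to all of $A$ and the $r$-proper-progression step you have only embedded a subset of $A$ of proportion $1/r$, and the key injectivity assertion is unsubstantiated.
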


For a vector $\blambda=(\lambda_1,\dotsc,\lambda_k)$ we set
$\blambda^i=(\lambda_1,\dotsc,\lambda_{i-1},\lambda_{i+1},\dotsc,\lambda_k)$
and let $S_{\blambda}(A)=\lambda_1\cdot A+\dotsb+\lambda_k\cdot A$
denote the corresponding sumset involving~$A$. 
The greatest common divisor of a set of integers $\{\lambda_1,\dotsc,\lambda_k\}$ is
$\gcd(\lambda_1,\dotsc,\lambda_k\}=\max \{d\in\N : d\mid \lambda_i\text{ for  }i=1,\dotsc,k\}$.
Integers $\{\lambda_1,\dotsc,\lambda_k\}$ are said to be coprime
if $\gcd(\lambda_1,\dotsc,\lambda_k)=1$. For $\blambda=(\lambda_1,\dotsc,\lambda_k)$
we abbreviate $\gcd(\lambda_1,\dotsc,\lambda_k)$ to~$\gcd(\blambda)$. The
notation $\norm{\blambda}_1$ stands for 
$\abs{\lambda_1}+\dotsb+\abs{\lambda_k}$.

\section{Lower bounds on sums of dilates}
We start off with a lower bound on $A+3\cdot A$.
\begin{proof}[Proof of theorem~\ref{thmAthreeA}]
Let $a_1<\dotsb<a_n$ be the elements of $A$ in increasing order.
Let $A_k=\{a_1,\dotsc,a_k\}$. We will analyze how the size
of $B_k=A_k+3\cdot A_k$ grows as $k$ grows. We will prove that
for every $k\geq 5$ either $\abs{B_k}-\abs{B_{k-1}}\geq 4$ holds or
both $\abs{B_k}-\abs{B_{k-1}}=3$ and $\abs{B_{k+1}}-\abs{B_k}\geq 5$ hold.
The theorem will then follow.

Note that three sums $3a_{k-1}+a_k,a_{k-1}+3a_k,a_k+3a_k$ belonging
to $B_k$ are greater than any element of $B_{k-1}$, and thus do not
belong to $B_{k-1}$. Moreover the three sums are distinct since
$3a_{k-1}+a_k<a_{k-1}+3a_k<a_k+3a_k$. Therefore, to complete the proof
we need to analyze the case when $B_k\setminus B_{k-1}$ consists
of precisely these three sums. 

There are two cases: either $3a_k+a_{k-2}$ is in $B_{k-1}$ or it is not.
\renewcommand{\labelenumi}{Case \Alph{enumi}:}
\begin{enumerate}
\item If $3a_k+a_{k-2}\in B_{k-1}$, then since $3a_k+a_{k-2}>3a_{k-1}+a_{k-2}$,
it follows that $3a_k+a_{k-2}=4a_{k-1}$. By dilating and translating the set $A$
as necessary we can assume that $a_{k-2}=0$ and $a_{k-1}=3$. Then it follows
that $a_k=4$. Since $a_k+3a_{k-2}=4$ is larger than 
$a_{k-1}+3a_{k-2}=3+3\cdot 0$, it follows that
$a_k+3a_{k-2}=3a_{k-1}+a_t$ or $a_k+3a_{k-2}=3a_k+a_t$ for some $t<k$. 
Thus $a_t=-5$ or $a_t=-8$ for some $t\leq k-3$. 
In either case, since $a_{k-3}\geq -8$,
it follows that $a_{k-3}+3a_k\geq 4>3=a_{k-1}+3a_{k-2}$. Thus 
$a_{k-3}+3a_k=a_{k-2}+3a_{k-1}$, and $a_{k-3}=-3$. 
Therefore the five largest elements of $B_k$ are $16,15,13,12,9$.

Since $a_{k-3}\neq a_t\geq -8$, we also have $a_{k-4}\geq -8$.
Hence $a_{k-4}+3a_k\geq 4=a_k+3a_{k-2}$, implying that
either $a_{k-4}+3a_k=a_k+3a_{k-2}$ or $a_{k-4}+3a_k=a_{k-3}+3a_{k-1}$.
Therefore, either $a_{k-4}=-6$ or $a_{k-4}=-8$. Since $a_t$
does not exceed $a_{k-4}$, it follows that $a_t=-8$, after
all. 

As before $\{3\cdot 4+a_{k+1},4+3a_{k+1},a_{k+1}+3a_{k+1}\}\subset B_{k+1}\setminus B_k$. We need to exhibit two additional elements
from $B_{k+1}$ that are not in $B_k$. There are a few subcases to consider:
\begin{enumerate}
\item $3\cdot 4+a_{k+1}=3a_{k+1}+3$. In this case 
$a_{k+1}=9/2$ implying that $3a_{k+1}+0=27/2$
and $3a_{k+1}-3=21/2$ are not in $B_k$.
\item $3\cdot 4+a_{k+1}=3a_{k+1}+0$. In this case $a_{k+1}=6$
implying that $3a_{k+1}+a_{k-1}=21$ and 
$3a_{k+1}+a_t=10$ are not in $B_k$.
\item  $3\cdot 4+a_{k+1}=3a_{k+1}-3$. In this case $a_{k+1}=15/2$
implying that $3a_{k+1}+3=51/2$ and $3a_{k+1}+0=45/2$ are not
in $B_k$.
\end{enumerate}
If these subcases do not hold, then 
none of the four sums 
$3a_{k+1}+3$, 
$3a_{k+1}+0$, 
$3a_{k+1}-3$, 
$a_{k+1}+3\cdot 3$ 
are equal to any of the
three elements $\{3\cdot 4+a_{k+1},4+3a_{k+1},a_{k+1}+3a_{k+1}\}$ 
of $B_{k+1}\setminus B_k$ that we already counted. Moreover,
all of them are greater than $9$.  We will show that
at least two of these four sums are not in $B_k$.
 Since $3a_{k+1}+3>3a_k+3=15$, only two subcases remain:
\begin{enumerate}
\setcounter{enumii}{3} 
\item $3a_{k+1}+3=16$. In this case $a_{k+1}=13/3$ implying that $3a_{k+1}-3=10$ and $a_{k+1}+3\cdot 3=40/3$ are not in $B_k$.
\item $3a_{k+1}+3$ is not in $B_k$. Since $3a_{k+1}>3a_k=12$, then either $3a_{k+1}+0$ is not in $B_k$ or it is equal to one of $16,15$ or $13$. 
In the latter case $a_{k+1}+3\cdot 3$ is not in $B_k$ being equal to
$43/3$, $14$ and $40/3$ in these three cases respectively.
\end{enumerate}
\item If $3a_k+a_{k-2}\not\in B_{k-1}$, then $3a_k+a_{k-2}=3a_{k-1}+a_k$.
By dilating and scaling we can ensure that $a_{k-2},a_{k-1},a_k$ are
$0,2,3$ respectively. Since $a_k+3\cdot a_{k-2}=3$ is an element of $B_{k-1}$,
we necessarily have that $a_{k-3}\geq 3-3a_{k-1}=-3$.
Indeed, suppose on the other hand that 
$a_{k-3} + 3 a_{k-1} < 3 = a_k + 3 a_{k-2}$. Then we must have that 
$a_k + 3 a_{k-2}$ is equal to $a_{k-2} + 3 a_{k-1}$ or $4 a_{k-1}$, neither of which is possible.
Since $3a_k+a_{k-3}$
is an element of $B_{k-1}$ not less than $6$ there are two cases
\begin{enumerate}
\item $a_{k-3}=-3$. The five largest elements of $B_k$ are $12,11,9,8,6$.  The
sums $3a_{k+1}+a_{k+1}$, $3a_{k+1}+3$, $a_{k+1}+3\cdot 3$ are in $B_{k+1}\setminus B_k$. The sums $3a_{k+1}+2$, $3a_{k+1}+0$, $3a_{k+1}-3$, $a_{k+1}+6$ are each greater than $6$. The several subcases are
\begin{enumerate}
\item $3a_{k+1}+2=a_{k+1}+9$. Then $3a_{k+1}+0=21/2$ and $3a_{k+1}-3=15/2$
are in $B_{k+1}\setminus B_k$.
\item $3a_{k+1}+2=12$. Then $3a_{k+1}+0=10$ and $3a_{k+1}-3=7$ are in $B_{k+1}\setminus B_k$.
\item $3a_{k+1}+2$ is not in $B_k$. Then $3a_{k+1}$ is either $11$ or $12$. In either case $a_{k+1}+6$ is in $B_{k+1}\setminus B_k$.
\end{enumerate}
\item $a_{k-3}=-1$. The four largest elements of $B_k$ are $12,11,9,8$. The
sums $3a_{k+1}+a_{k+1}$, $3a_{k+1}+3$, $a_{k+1}+3\cdot 3$ are in $B_{k+1}\setminus B_k$. The sums $3a_{k+1}+2$, $3a_{k+1}+0$, $3a_{k+1}-1$, $a_{k+1}+6$ are each greater than $8$. The subcases are
\begin{enumerate}
\item $3a_{k+1}+2=a_{k+1}+3\cdot 3$. Then $3a_{k+1}+0=21/2$ and $3a_{k+1}-1=19/2$
are in $B_{k+1}\setminus B_k$.
\item $3a_{k+1}+2=12$. Then $3a_{k+1}=10$ and $a_{k+1}+6=28/3$ are in $B_{k+1}\setminus B_k$.
\item In the case $3a_{k+1}+2\in B_{k+1}\setminus B_k$ the sum $3a_{k+1}+0$ is
either $12$ or $11$. In the first case $a_{k+1}+6=10$ is in $B_{k+1}\setminus B_k$.
In the second case $3a_{k+1}-1=10$ is in $B_{k+1}\setminus B_k$ and not
equal to $a_{k+1}+9$.
\end{enumerate}
\end{enumerate}
\end{enumerate}
\end{proof}

One might be puzzled by the 
two-step induction scheme in the proof above, where
with addition of each next element
the sumset $B_k$ either grows
by the required number of elements, or
it grows by more than that at the next step.
However, this actually occurs for the set
$A=\{0,1,3,4,\dotsc,3k,3k+1\}$. Each next multiple of 
$3$ increases the size of the sumset by $5$, and 
every other number increases the sumset only by $3$.
Such examples impose a limitation on how simple such kinds 
of proofs can be. For instance, if one adopts this proof strategy 
to show that $\abs{A+4\cdot A}\geq 5\abs{A}-O(1)$,
then the example $A=\{0,1,2,4,5,6\dotsc,4m,4m+1,4m+2\}$
shows that a similar proof will require a three-step
induction. 

For proving the lower bound on an arbitrary sum of dilates 
\eqref{genericdilatesum} we abandon the proof strategy above.
The basis for the modified approach is the observation
that the reason why $A+\lambda\cdot A$ is large
in the examples above is that
$A$ can be partitioned into $\lambda$ subsets $A_1,\dotsc,A_{\lambda}$
according to the residue class modulo $\lambda$, such that
$A_i+\lambda\cdot A_i$ are disjoint from one another
for different values of~$i$.
Thus $A+\lambda\cdot A$ is large because each of 
$A_i+\lambda\cdot A_i$ is large.

For a general sum of dilates $S_{\blambda}(A)=\lambda_1\cdot A+\dotsb+\lambda_k\cdot A$ 
it turns out that looking modulo only $\lambda_1$
is insufficient. One needs to find a $\tau$ that is 
coprime with $\sum_{i=1}^k \lambda_i$. Then if
$A_1\cup \dotsb\cup A_{\tau}$ is a partition of $A$ into residue
classes modulo $\tau$, the $S_{\blambda}(A_i)$'s are disjoint.

It would have been excellent 
had $A_1,\dotsc,A_{\tau}$ always turned out to be arithmetic
progressions. They need not be, but under favorable circumstances 
at least one of the $A_i$ is a somewhat denser set than $A$.
The denser a set is, the closer it is to being an arithmetic
progression. So we would like to keep the dense sets of the partition.
As for the parts that are not dense, those will be partitioned
further into more parts, at least some of which are
dense. This leads to a recursive
subpartition process, where at each step we partition
sparse sets until only a few elements of $A$ belong to 
the sparse parts. Those we will discard.

The next lemma characterizes sets $A$ that cannot 
be broken into parts, at least one of which is dense, as those for which
one can use induction on the number
of dilates. After that 
lemma~\ref{partitionlemma} describes the basic step in the 
repeated subpartition process.
\begin{lemma}\label{inductlemma}
For a vector $\blambda=(\lambda_1,\dotsc,\lambda_k)$ of $k\geq 2$ coprime non-zero integers,
let $\tau_i=\gcd(\blambda^i)$.
Then for every such $\blambda$, every $\delta>0$ and every
finite set $A\subset \{1,\dotsc,n\}$ at least one of the following holds:
\renewcommand{\labelenumi}{\Roman{enumi})}
\renewcommand{\theenumi}{\Roman{enumi}}
\begin{enumerate}
\item \label{inductalt} The sumset 
$S=S_{\blambda}(A)$
satisfies
\begin{equation*}
\abs{S}\geq \frac{1}{k-1}\sum_{i=1}^k \tau_i\abs{S_i}
-2\delta n-\tau_1
\end{equation*}
where $S_i=S_{\blambda^i}(A)$.
\item \label{sparsealt} There is an $i\in\{1,\dotsc,k\}$ and $r^*\in \Z/\tau_i\Z$ such that
the set
\begin{equation*}
\{a\in A: a\equiv r^* \pmod {\tau_i}\}
\end{equation*}
is contained either in $\bigl[1,\bigl(1-\delta/\abs{\lambda_i}\bigr)n\bigr]$ or in $\bigl[\delta n/\abs{\lambda_i},n\bigr]$.
\end{enumerate}
\end{lemma}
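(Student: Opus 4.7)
The plan is to prove the contrapositive: assuming alternative (II) fails, I would derive alternative (I). Failure of (II) means that for every $i\in\{1,\dots,k\}$ and every residue $r\in\Z/\tau_i\Z$, the slice $A^{(i)}_r:=\{a\in A:a\equiv r\pmod{\tau_i}\}$ contains an element below $\delta n/\abs{\lambda_i}$ and one above $(1-\delta/\abs{\lambda_i})n$. In particular every slice is non-empty and spans nearly the full interval $[1,n]$.

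The structural starting point is a disjoint decomposition of $S$ for each index. Since $\gcd(\lambda_i,\tau_i)=1$ (from $\gcd(\blambda)=1$ together with $\tau_i\mid\lambda_j$ for $j\neq i$) and $S_i\subset\tau_i\Z$ (same reason), one obtains
\begin{equation*}
S = \bigsqcup_{r\in\Z/\tau_i\Z}\bigl(\lambda_i\cdot A^{(i)}_r + S_i\bigr),
\end{equation*}
the $r$-th piece lying in residue class $\lambda_i r\pmod{\tau_i}$. The elementary bound $\abs{X+Y}\ge\abs{X}+\abs{Y}-1$ applied to each piece, summed over $r$, gives $\abs{S}\ge\tau_i\abs{S_i}+\abs{A}-\tau_i$ for every $i$; summing these $k$ inequalities would only yield the prefactor $\frac{1}{k}$ rather than the target $\frac{1}{k-1}$, so some refinement is needed.

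To close this gap I would strengthen each per-piece bound by exploiting that $\lambda_i\cdot A^{(i)}_r\subset\abs{\lambda_i}\Z$. Setting $B_r=(A^{(i)}_r-r)/\tau_i$ and $T_i=S_i/\tau_i$, each piece is an affine image of $\lambda_i\cdot B_r+T_i$; partitioning $T_i$ by its residues modulo $\abs{\lambda_i}$ and applying $\abs{X+Y}\ge\abs{X}+\abs{Y}-1$ to each non-empty residue class gives
\begin{equation*}
\abs{\lambda_i\cdot A^{(i)}_r+S_i}\ \ge\ q_i\abs{A^{(i)}_r}+\abs{S_i}-q_i,
\end{equation*}
where $q_i$ is the number of residues modulo $\abs{\lambda_i}$ hit by $T_i$. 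The coprimality of $\blambda$ forces $\gcd(\abs{\lambda_i},\lambda_j/\tau_i:j\neq i)=1$, since any common prime $p$ would give $p\tau_i\mid\gcd(\blambda^i)=\tau_i$; combining this with the strong spread condition forced by failure of (II) should yield $q_i=\abs{\lambda_i}$ for every~$i$. Summing over $r$ then gives the strengthened bound $\abs{S}\ge\abs{\lambda_i}\cdot\abs{A}+\tau_i\abs{S_i}-\tau_i\abs{\lambda_i}$.

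Finally, summing this stronger inequality over $i=1,\dots,k$ gives $k\abs{S}\ge\norm{\blambda}_1\abs{A}+\sum_i\tau_i\abs{S_i}-\sum_i\tau_i\abs{\lambda_i}$; using the span estimate $\abs{S}\le\norm{\blambda}_1(n-1)+1$ and the observation that failure of (II) pins $\abs{A}$ to within $O(\delta n)$ of $n$ lets one convert the prefactor $\tfrac{1}{k}$ into $\tfrac{1}{k-1}$, with the slack absorbed into the admissible error $2\delta n+\tau_1$. The main obstacle I anticipate is precisely the surjectivity claim $q_i=\abs{\lambda_i}$: failure of (II) directly controls residues of $A$ modulo $\tau_i$, whereas the refined bound demands control of residues of $T_i$ modulo the a priori unrelated modulus $\abs{\lambda_i}$, and bridging this mismatch is the step where the coprimality of $\blambda$ and the full force of the spread condition must be deployed.
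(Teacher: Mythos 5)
There is a genuine gap --- in fact two --- and both occur exactly at the places you flag as delicate. First, the central strengthening $q_i=\abs{\lambda_i}$ is false. Failure of alternative (II) controls the residues of $A$ only modulo the moduli $\tau_j$, and while $\tau_j\mid\lambda_i$ for every $j\neq i$, the least common multiple of $\{\tau_j:j\neq i\}$ can be far smaller than $\abs{\lambda_i}$. Concretely, take $\blambda=(4,1,1)$, so that $\tau_1=\tau_2=\tau_3=1$, and $A=\{4,8,\dotsc,4m\}\subset\{1,\dotsc,n\}$ with $n=4m$ and $n>16/\delta$. Every $\tau_i$-class is all of $A$, which contains an element below $\delta n/\abs{\lambda_i}$ and an element above $(1-\delta/\abs{\lambda_i})n$, so (II) fails; yet $T_1=A+A\subset 4\Z$ meets a single residue class modulo $\abs{\lambda_1}=4$, i.e.\ $q_1=1$. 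The coprimality fact you verify, $\gcd\bigl(\abs{\lambda_i},\{\lambda_j/\tau_i:j\neq i\}\bigr)=1$, is correct but irrelevant to this: it says nothing about the residues of the elements of $A$ modulo divisors of $\lambda_i$, and hypothesis (II) gives no handle on them.

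Second, the final conversion of the prefactor $\tfrac1k$ into $\tfrac1{k-1}$ rests on the assertion that failure of (II) pins $\abs{A}$ to within $O(\delta n)$ of $n$. That is not so: (II) failing only forces each residue class modulo each $\tau_i$ to contain one element near each end of $[1,n]$, so $A$ may have as few as $O(\sum_i\tau_i)$ elements. For such sparse $A$ the bound $\abs{S}\leq\norm{\blambda}_1(n-1)+1$ is useless against $\norm{\blambda}_1\abs{A}$, and indeed $\abs{S}$ can hugely exceed $\norm{\blambda}_1\abs{A}$, so no upper bound of the kind you invoke can turn the (correct) $\tfrac1k$-inequality into the stated one; even in the dense case the slack produced would be of order $\norm{\blambda}_1\delta n$ rather than the required $2\delta n$. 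Note also that your argument uses only the nonemptiness of the residue classes, never the quantitative spread. The paper's proof uses precisely that spread: within each class $j$ modulo $\tau_i$ it takes the extreme elements $l_{i,j}$ and $r_{i,j}$ of $\lambda_i\cdot A$, observes (as you do) that the translates $l_{i,j}+S_i$, respectively $r_{i,j}+S_i$, are pairwise disjoint because $S_i\subset\tau_i\Z$, and then marks these $\tau_i$ translates of each $S_i$ inside $k-1$ copies of $S$ in a telescoping fashion along the range of $S$; the hypothesis that all $l_{i,j}$ (resp.\ $r_{i,j}$) lie within $\delta n$ of each other confines the double-marked elements to $k-1$ intervals of length $2\delta n$, which is exactly where the error term $-2\delta n$ (and the $-\tau_1$) comes from. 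Your disjoint decomposition of $S$ via residues modulo $\tau_i$ is the one ingredient shared with the paper, but the counting mechanism that produces $\frac{1}{k-1}\sum_i\tau_i\abs{S_i}-2\delta n-\tau_1$ is missing from the proposal.
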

\begin{proof}
Suppose the alternative \ref{sparsealt} does not hold.
Then
\begin{align*}
l_{i,j}&=\min\{\lambda_i a : a\in A,\ a\equiv j\pmod{\tau_i}\},\\
r_{i,j}&=\max\{\lambda_i a : a\in A,\ a\equiv j\pmod{\tau_i}\},\\
l_i&=\min\{\lambda_i A\}=\min_j l_{i,j},\\
r_i&=\max\{\lambda_i A\}=\max_j r_{i,j}
\end{align*}
satisfy $l_{i,j}-l_i\leq \delta n$ and $r_i-r_{i,j}\leq \delta n$.
Set $L_i=\{l_{i,1},\dotsc,l_{i,\tau_i}\}$ and
$R_i=\{r_{i,1},\dotsc,r_{i,\tau_i}\}$. Since
all elements of $S_i$ are divisible by $\tau_i$, whereas
$L_i$ is a set of distinct integers
modulo $\tau_i$, we have that $S_i+l_{i,j}$ is disjoint from $S_i+l_{i,j'}$
for $j\neq j'$. Similarly, $S_i+r_{i,j}$ is disjoint from $S_i+r_{i,j'}$.

For a set $T$ and $x\in \Z$ let $T_{\leq x}=\{t\in T: t\leq x\}$ and
$T_{>x}=\{t\in T: t>x\}$.
Now we will use the idea from the proof of \cite[theorem 1.1]{cite:supadd_submul}. Namely, we make
$k-1$ copies of the set $S$, and then mark some of the elements in each copy.
We allow some elements to be marked more than once.
We start by marking in the first copy the elements of $L_k+S_k$.
They all belong to the interval $[l_1+\dotsb+l_k,r_1+\dotsb+r_{k-1}+l_k+\delta n]$. 
Then in the first copy mark the elements of 
$R_{k-1}+(S_{k-1})_{>r_1+\dotsb+r_{k-2}+l_k}$.
All elements in the first copy are marked at most once except 
possibly some of the elements in the interval $[r_1+\dotsb+r_{k-1}+l_k-\delta n,r_1+\dotsb+r_{k-1}+l_k+\delta n]$
are marked twice.
This interval has length $2\delta n$.

Then, for $2\leq i\leq k-2$, in the $i$'th copy we mark the elements of 
\begin{equation*}
L_{k-i+1}+(S_{k-i+1})_{\leq r_1+\dotsb+r_{k-i}+l_{k-i+2}+\dotsb+l_k}
\end{equation*}
and of
\begin{equation*}
R_{k-i}+(S_{k-i})_{>r_1+\dotsb+r_{k-i-1}+l_{k-i+1}+\dotsb+l_k}.
\end{equation*} 
Only the elements in the interval 
\begin{equation*}
[r_1+\dotsb+r_{k-i}+l_{k-i+1}+\dotsb+l_k-\delta n,r_1+\dotsb+r_{k-i}+l_{k-i+1}+\dotsb+l_k+\delta n]
\end{equation*}
can possibly be marked twice.
Finally, in the $k-1$'st copy we mark the elements of
$L_2+(S_2)_{\leq r_1+l_3+\dotsb+l_k}$ and of
$R_1+(S_1)_{>l_2+\dotsb+l_k}$. Again elements only in 
$[r_1+l_2+\dotsb+l_k-\delta n,r_1+l_2+\dotsb+l_k+\delta n]$
can be marked twice. And again this interval
is of length $2\delta n$.

Counting the number of marked elements we obtain
\begin{equation*}
(k-1)\abs{S}\geq \sum_{i=1}^k \tau_i\abs{S_i}-2(k-1)\delta n-\tau_1
\end{equation*}
where the right side counts the number of
elements that are marked at least once, and
the left side counts the total number of elements. The term $\tau_1$
on the right is due to the set $(S_1)_{>l_2+\dotsb+l_k}$ having one fewer element
than $S_1$.
\end{proof}
\begin{corollary}\label{fibercorollary}
If $\sum_{i=1}^k \lambda_i=0$, then 
\begin{equation}\label{fibercoreq}
\abs{S_{\blambda}(A)}\geq \frac{1}{k-1}\sum_{i=1}^k\abs{S_{\blambda}(A^i)}-5
\end{equation}
for any non-empty $A\subset \Z$. Moreover the vectors $\blambda^i$ are
coprime for every~$i$.
\end{corollary}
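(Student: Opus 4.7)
The plan is to deduce the corollary from Lemma~\ref{inductlemma}, exploiting the hypothesis $\sum_i\lambda_i=0$ in two ways: it forces $\tau_i=1$ for every $i$, and it makes both sides of \eqref{fibercoreq} translation-invariant in $A$.

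For the ``moreover'' clause, if $d$ divides every $\lambda_j$ with $j\neq i$ then $d\mid\sum_{j\neq i}\lambda_j=-\lambda_i$, so $d\mid\gcd(\blambda)=1$, and hence $\tau_i=\gcd(\blambda^i)=1$. Plugging $\tau_i=\tau_1=1$ into Lemma~\ref{inductlemma} collapses its first alternative to $|S|\geq\tfrac{1}{k-1}\sum_i|S_i|-2\delta n-1$. Next, translating $A$ by $t\in\Z$ shifts $S_\blambda(A)$ by $t\sum_j\lambda_j=0$ and shifts each $S_{\blambda^i}(A)$ by $-t\lambda_i$, so all cardinalities in \eqref{fibercoreq} are unchanged. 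I therefore normalize so that $\min A=1$ and write $n=\max A$, making $A\subset\{1,\ldots,n\}$ with both $1$ and $n$ in $A$.

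Because $\tau_i=1$, the ``residue class'' appearing in alternative~\ref{sparsealt} is all of $A$. That alternative would then force $A\subset[1,(1-\delta/|\lambda_i|)n]$, which is incompatible with $n\in A$ for any $\delta>0$, or $A\subset[\delta n/|\lambda_i|,n]$, which is incompatible with $1\in A$ whenever $\delta>|\lambda_i|/n$. Picking $\delta$ just above $\max_i|\lambda_i|/n$ thus rules out alternative~\ref{sparsealt}, so alternative~\ref{inductalt} holds and yields \eqref{fibercoreq} with an additive error of $2\delta n+1$.

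The main obstacle is trimming this error down to the constant~$5$ in the statement. The naive choice above gives an error of size $\approx 2\norm{\blambda}_\infty+1$, which is $O(1)$ but not $5$. To reach $5$ I would iterate: whenever alternative~\ref{sparsealt} happens to hold at some smaller $\delta$, $A$ is forced inside an interval strictly shorter than $[1,n]$; by translation invariance I slide $A$ back to start at $1$ and re-apply Lemma~\ref{inductlemma} with the new, smaller $n$. The resulting geometric shrinkage eventually places $A$ in a range where a single application of alternative~\ref{inductalt} with $\delta n\leq 2$ produces an error at most $2\delta n+1\leq 5$, while the handful of very small $n$ not reached by the iteration can be checked directly since $|A|$ is then bounded. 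The subtle point throughout is that the translation invariance supplied by $\sum_j\lambda_j=0$ is precisely what lets each shrinking step preserve both sides of \eqref{fibercoreq} exactly.
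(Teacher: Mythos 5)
Your reduction is exactly the paper's: you get $\tau_i=\gcd(\blambda^i)=1$ from coprimality together with $\sum_{j\neq i}\lambda_j=-\lambda_i$, and you use $\sum_j\lambda_j=0$ for translation invariance (shifting $A$ by $t$ moves $S_{\blambda}(A)$ by $0$ and each $S_{\blambda^i}(A)$ by $-t\lambda_i$) to normalize $1\in A$, $n=\max A\in A$, and then invoke alternative~\ref{inductalt} of lemma~\ref{inductlemma}. You are also right about the subtlety you flag: used as a black box, the lemma lets you exclude alternative~\ref{sparsealt} only when $\delta n>\abs{\lambda_i}$ for every $i$, so a direct application gives an error of roughly $2\norm{\blambda}_{\infty}+1$ rather than $5$ (the paper's one-line claim that $\delta=2/n$ rules out \ref{sparsealt} is literally accurate only when every $\abs{\lambda_i}=1$).

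The gap is in your repair. After your own normalization $1,n\in A$, alternative~\ref{sparsealt} can only hold vacuously: containment of $A$ in $\bigl[1,(1-\delta/\abs{\lambda_i})n\bigr]$ is impossible because $n\in A$, and containment in $\bigl[\delta n/\abs{\lambda_i},n\bigr]$ forces $\delta n\leq\abs{\lambda_i}$, in which case that interval already contains $[1,n]\supseteq A$ and yields no shrinkage whatsoever. So the premise of your iteration --- that \ref{sparsealt} ``forces $A$ inside an interval strictly shorter than $[1,n]$'' --- is never available, the iteration makes no progress, and for $\norm{\blambda}_{\infty}\geq 3$ you never reach the regime $\delta n\leq 2$ with \ref{sparsealt} excluded; consequently the ``finitely many small $n$'' endgame never materializes either (and even there, \eqref{fibercoreq} is not something one can simply ``check''). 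The correct way to reach the constant $5$ is to look inside the proof of lemma~\ref{inductlemma}: the failure of \ref{sparsealt} is used there only to obtain $l_{i,j}-l_i\leq\delta n$ and $r_i-r_{i,j}\leq\delta n$, and when every $\tau_i=1$ there is a single residue class, so $l_{i,j}=l_i$ and $r_{i,j}=r_i$ and these inequalities hold trivially. Hence the marking argument, and with it the bound of alternative~\ref{inductalt}, is valid for \emph{every} $\delta>0$ once all $\tau_i=1$; taking $\delta=2/n$ gives the error $2\delta n+\tau_1=5$ (with a single class the doubly-marked intervals are in fact empty, so one can even get error $1$). As written, your argument proves \eqref{fibercoreq} only with $5$ replaced by about $2\norm{\blambda}_{\infty}+1$, which would suffice for the application in theorem~\ref{mainthm} but is not the stated corollary.
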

\begin{proof}
Since both sides of \eqref{fibercoreq} are translation-invariant,
we can assume that $1\in A$, and set $n=\max A$.
Since $\tau_i\mid \sum_{j\neq i}\lambda_j=-\lambda_i$ and
$\blambda$ is a coprime vector, $\tau_i=1$ for all $i$. If we set
$\delta=2/n$, then the alternative \ref{sparsealt} does not hold,
and the alternative \ref{inductalt} becomes \eqref{fibercoreq}.
\end{proof}

\begin{lemma}\label{partitionlemma}
For every $\blambda=(\lambda_1,\dotsc,\lambda_k)$ satisfying
$\sum_{i=1}^k \lambda_i\neq 0$ there
are $\alpha>0$ and $\beta>0$ such that
for every integer $t\geq 0$ and every  
finite set $A\subset \Z$ of size $\abs{A}\geq (\alpha/2)^{-\beta^t}$
there are four families of sets $\mathcal{D}_t$, $\mathcal{G}_t$,
$\mathcal{S}_t, \mathcal{T}_t$ satisfying
\begin{enumerate}
\item The families $\mathcal{D}_t$, $\mathcal{G}_t$,
$\mathcal{S}_t, \mathcal{T}_t$ together form a partition of $A$, i.e., the
sets in $\mathcal{D}_t$, $\mathcal{G}_t$,
$\mathcal{S}_t, \mathcal{T}_t$ are disjoint from one another and their union is $A$.
\item If $B_1,B_2$ are any two unequal sets from $\mathcal{D}_t\cup
\mathcal{G}_t\cup\mathcal{S}_t\cup \mathcal{T}_t$ (i.e. the sets $B_1$ and
$B_2$ possibly belong to different families), then $S_{\blambda}(B_1)$
is disjoint from $S_{\blambda}(B_2)$.
\item The sets in $\mathcal{D}_t$ are dense: each set in $\mathcal{D}_t$
is $\norm{\blambda}_1$-isomorphic to a subset of an interval
of length at least $\abs{A}^{\beta^t}$ of density
at least $\alpha/2$.
\item The sets in $\mathcal{G}_t$ are growing: for each $G\in\mathcal{G}_t$
we have $\abs{S_{\blambda}(G)}\geq \norm{\blambda}_1\abs{G}$.
\item The sets in $\mathcal{S}_t$ are small, but not too small: $\bigl\lvert\bigcup \mathcal{S}_t\bigr\rvert \leq \abs{A}/2^t$,
but $\abs{S}\geq \abs{A}^{\beta^t}$ for every $S\in\mathcal{S}_t$.
\item The sets in $\mathcal{T}_t$ are tiny: $\bigl\lvert\bigcup \mathcal{T}_t\bigr\rvert \leq \frac{1}{\alpha}\sum_{i=1}^t \abs{A}^{1-\beta^i}$.
\end{enumerate}
\end{lemma}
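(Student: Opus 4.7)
The plan is to induct on $t$. The sets already placed in $\mathcal{D}_t$, $\mathcal{G}_t$, $\mathcal{T}_t$ can be copied into the corresponding families at step $t+1$ unchanged, because the defining conditions on those three families only relax as $t$ grows. The only nontrivial work is to process each still-sparse set $S \in \mathcal{S}_t$ so as to drive down $\bigl\lvert\bigcup \mathcal{S}_{t+1}\bigr\rvert$ by the required factor of two. The base case $t=0$ is trivial: $\mathcal{T}_0$ is empty, and $A$ itself is dropped into whichever of $\mathcal{D}_0, \mathcal{G}_0, \mathcal{S}_0$ its structure allows.

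For the inductive step, I would process each $S \in \mathcal{S}_t$ by a trichotomy. If $\abs{S_\blambda(S)} \geq \norm{\blambda}_1 \abs{S}$, place $S$ into $\mathcal{G}_{t+1}$. If $S$ is already $\norm{\blambda}_1$-isomorphic to a subset of density at least $\alpha/2$ of an interval of length at least $\abs{A}^{\beta^{t+1}}$, place $S$ into $\mathcal{D}_{t+1}$. Otherwise, $S$ is neither growing nor dense and must be refined.

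To carry out the refinement, I would first observe that $\abs{S_\blambda(S)} < \norm{\blambda}_1 \abs{S}$, together with the Ruzsa triangle inequality (lemma~\ref{ruzsatriag}) and Pl\"unnecke's inequality, forces $\abs{S+S} \leq K\abs{S}$ for some constant $K = K(\blambda)$. Theorem~\ref{freimanthm} applied with $r = \norm{\blambda}_1$ then produces a Freiman $r$-isomorphism from $S$ into a box $[1,t_1]\times\dotsb\times[1,t_d]$ of density at least $\alpha'(\blambda) > 0$ and bounded dimension $d = d(\blambda) \geq 2$ (had $d = 1$, $S$ would already have been dense). I would then select an integer $\tau$ coprime to $\sum_i \lambda_i$---so that the sumsets $S_\blambda(B_i)$ of the pieces are automatically disjoint, as observed in the discussion preceding lemma~\ref{inductlemma}---such that the residue classes of $S$ modulo $\tau$ correspond, via the Freiman isomorphism, to a slicing of the box by one of its coordinate directions. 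By pigeonhole at least one slice has density at least $\alpha'$ in a $(d-1)$-dimensional sub-box, so it is effectively denser in a shorter interval than $S$ itself. Iterating this coordinate reduction at most $d-1$ times within a single inductive step brings us to the $d=1$ case, at which point the dense part is placed in $\mathcal{D}_{t+1}$, parts with fewer than $\abs{A}^{\beta^{t+1}}$ elements are placed in $\mathcal{T}_{t+1}$, and the surviving medium-sized parts are collected in $\mathcal{S}_{t+1}$.

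The main obstacle I anticipate is the arithmetic engineering of $\tau$: it must be simultaneously coprime to $\sum_i \lambda_i$ and such that its residue classes align with a coordinate direction of the hidden box for $S$. This is delicate because the natural candidate $\tau$ arising from a GAP generator may share a factor with $\sum_i \lambda_i$, which is precisely the obstruction that the hypothesis $\sum_i \lambda_i \neq 0$ is meant to keep from being absolute. Since one can modify a generator of the GAP by translating it by a large multiple without altering the Freiman $r$-isomorphism type, the fix is to perturb $\tau$ by a suitable multiple of $\sum_i \lambda_i$ to force coprimality. On the quantitative side, $\beta$ must be chosen so that one pass of box-slicing reduces the interval length from $\abs{A}^{\beta^t}$ to at least $\abs{A}^{\beta^{t+1}}$, and $\alpha$ small enough that the $\mathcal{T}$-leakage at each step is bounded by $\abs{A}^{1-\beta^{t+1}}/\alpha$ as required; both choices depend only on $\blambda$, via $K(\blambda)$, $d(\blambda)$, and $\alpha'(\blambda)$.
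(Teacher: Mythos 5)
Your skeleton matches the paper's: induct on $t$, carry $\mathcal{D},\mathcal{G},\mathcal{T}$ forward unchanged, and for each $S\in\mathcal{S}_{t-1}$ either certify growth via the triangle inequality (if $\abs{S+S}\geq\norm{\blambda}_1^2\abs{S}$ then $\abs{S_{\blambda}(S)}\geq\norm{\blambda}_1\abs{S}$), or apply theorem~\ref{freimanthm} with $r=\norm{\blambda}_1$ and slice the resulting box. But the heart of your refinement step has a genuine gap. You want the pieces of $S$ to be residue classes modulo an integer $\tau$ coprime to $\sum_i\lambda_i$, and you want these residue classes to ``correspond, via the Freiman isomorphism, to a slicing of the box by one of its coordinate directions.'' There is no reason such a $\tau$ exists: a Freiman isomorphism of order $\norm{\blambda}_1$ preserves coincidences among $\norm{\blambda}_1$-fold sums and nothing else; it does not respect congruences, and the residue classes of $S\subset\Z$ modulo any fixed $\tau$ are intrinsic to $S$ and in general bear no relation to coordinate fibers of the image. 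Perturbing a GAP generator by a multiple of $\sum_i\lambda_i$ does not repair this, because the obstruction is not a failure of coprimality but the absence of any alignment at all. You have imported the mechanism of the \emph{other} half of the argument (the coprime modulus $\tau$ of lemma~\ref{inductlemma}, used later in the proof of theorem~\ref{mainthm}) into a place where it is neither available nor needed. The correct and much simpler device, and the one that makes the hypothesis $\sum_i\lambda_i\neq 0$ do its work, is to slice the Freiman image $A''\subset[1,t_1]\times\dotsb\times[1,t_d]$ (with $t_1$ the longest side) into the fibers $A_x$ over $x\in[1,t_2]\times\dotsb\times[1,t_d]$: every element of $S_{\blambda}(A_x)$ has last $d-1$ coordinates equal to $\bigl(\sum_i\lambda_i\bigr)x$, so distinct fibers have disjoint sumsets in $\Z^d$, and since the isomorphism has order $\norm{\blambda}_1$ both the sizes and the disjointness transfer back to the preimages in $\Z$. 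No coprimality enters the lemma at all.

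Separately, your quantitative bookkeeping does not deliver conditions 5 and 6. ``By pigeonhole at least one slice is dense'' is not the relevant statement: you must show that the \emph{total} mass left in $\mathcal{S}_{t}$ halves at each step, which requires classifying every fiber, not exhibiting one good one. The paper does this by declaring a fiber dense as soon as $\abs{A_x}>\alpha t_1/2$ (so it has density $\geq\alpha/2$ in an interval of length $t_1\geq\abs{A}^{2\beta^{t}}$), so that the fibers not sent to $\mathcal{D}_t$ have union at most $(\alpha t_1/2)\,t_2\dotsm t_d\leq\abs{A'}/2$; and by sending to $\mathcal{T}_t$ exactly the fibers of size at most $t_1^{1/2}$, whose union is at most $(\abs{A'}/\alpha)t_1^{-1/2}\leq(\abs{A}/\alpha)\abs{A}^{-\beta^{t}}$, while the survivors have size $\geq t_1^{1/2}\geq\abs{A}^{\beta^{t}}$. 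The choice $\beta=1/2d$ is what makes these exponents line up, and a single slicing per inductive step suffices; your proposed $(d-1)$-fold iterated reduction within one step is unnecessary and would further complicate the density and size accounting you have left unspecified.
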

\begin{proof}
We let $\alpha$ and $d$ be as in Freiman's theorem (theorem~\ref{freimanthm}) when
applied with $K=\norm{\blambda}_1^2$ and~$r=\norm{\blambda}_1$. We set $\beta=1/2d$.

The proof is by induction on $t$. For $t=0$ we simply set
$\mathcal{D}_0=\mathcal{G}_0=\mathcal{T}_0=\emptyset$ and $\mathcal{S}_0=\{A\}$.
If $t\geq 1$, then we use induction to obtain
$\mathcal{D}_{t-1}$, $\mathcal{G}_{t-1}$, $\mathcal{S}_{t-1}$ and $\mathcal{T}_{t-1}$.
We will not do anything to sets in $\mathcal{D}_{t-1}$, $\mathcal{G}_{t-1}$ and $\mathcal{T}_{t-1}$, 
they will become members of $\mathcal{D}_{t}$, $\mathcal{G}_{t}$ and $\mathcal{T}_t$
respectively.
However, the sets in $\mathcal{S}_{t-1}$ will be either moved to
$\mathcal{G}_{t}$ or subpartitioned further into $\mathcal{D}$-, $\mathcal{S}$-
and $\mathcal{T}$-sets.

Let $A'$ be a set in $\mathcal{S}_{t-1}$.
If $\abs{\lambda_1\cdot A'+\lambda_1\cdot A'}\geq \norm{\blambda}_1^2\abs{A'}$, then by corollary~\ref{triangcor} 
\begin{align*}
\abs{\lambda_1\cdot A'+(\lambda_2\cdot A'+\dotsb+\lambda_k\cdot A')}&\geq 
\norm{\blambda}_1\sqrt{\abs{A'}\abs{\lambda_2\cdot A'+\dotsb+\lambda_k\cdot A'}}
\\&\geq \norm{\blambda}_1\abs{A'}
\end{align*}
and we can move $A'$ to $\mathcal{G}_t$.

Hence we can assume that $\abs{A'+A'}=\abs{\lambda_1\cdot A'+\lambda_1\cdot A'}<
\norm{\blambda}_1^2\abs{A'}$. By Freiman's theorem (theorem~\ref{freimanthm})
the set $A'$ is $\norm{\blambda}_1$-isomorphic to $A''$ which is a subset of
$[1,t_1]\times\dotsb\times[1,t_d]$ of density at least $\alpha>0$,
where $d$ and $\alpha$ as above. Since
$A'$ and $A''$ are $\norm{\blambda}_1$-isomorphic,
$\abs{S_{\blambda}(A')}=\abs{S_{\blambda}(A'')}$.
Without loss of generality we may assume that $t_1\geq \dotsb\geq t_d$.
This assures us that $t_1\geq \abs{A'}^{1/d}\geq \abs{A}^{\beta^{t-1}/d}$.
For every $x\in [1,t_2]\times\dotsb\times[1,t_d]$ there is
a \textit{fiber} 
\begin{equation*}
A_x=\{(a_1,\dotsc,a_d)\in A'' : (a_2,\dotsc,a_d)=x\}.
\end{equation*}
These fibers form a partition
of~$A''$. 
Since $\sum_{i=1}^k \lambda_i\neq 0$ the set
$S_{\blambda}(A_x)$ is disjoint from $S_{\blambda}(A_y)$ for $x\neq y$.

Let $X=\{x\in [1,t_2]\times\dotsb\times[1,t_d] : \abs{A_x}\leq \alpha t_1/2\}$.
For $x\not\in X$ the fiber $A_x$ is
$\norm{\blambda}_1$-isomorphic to a subset of the interval $[1,t_1]$ of density at least
$\alpha/2$. Since $t_1\geq \abs{A}^{2\beta^t}\geq 
\frac{2}{\alpha}\abs{A}^{\beta^t}$ we can move
any fiber $A_x$ with $x\not\in X$ to $\mathcal{D}_t$.

Let $Y=\{y\in [1,t_2]\times\dotsb\times[1,t_d] : \abs{A_y}\leq t_1^{1/2}\}$.
Since $\abs{A'}\geq \alpha t_1\dotsb t_d$  we have that
 $\abs{\bigcup_{y\in Y} A_y}\leq (\abs{A'}/\alpha)t_1^{-1/2}$.
Therefore the total number of elements in fibers
of the form $A_y$ for $y\in Y$ for all $A'\in S$ is
at most $(\abs{A}/\alpha)\abs{A}^{-\beta^t}$. 
We add $\{A_y\}_{y\in Y}$ to $\mathcal{T}_t$.  
Because $\abs{\bigcup_{x\in X\setminus Y} A_x}\leq \abs{\bigcup_{x\in X} A_x} \leq \abs{A'}/2$, the remaining fibers $A_x$ with $x\in X\setminus Y$
can then be moved to~$\mathcal{S}_t$.
\end{proof}

With the previous two lemmas in our arsenal, we are ready to prove
the sharp lower bound on the arbitrary sum of dilates.  
\begin{proof}[Proof of theorem~\ref{mainthm}]
The proof is by induction on~$k$. The case $k=1$ is true since $\gcd(\lambda_1)=1$
only if $\lambda_1\in\{\pm 1\}$. Suppose we are given a vector 
$\blambda=(\lambda_1,\dotsc,\lambda_k)$ of coprime integers. 
Assume we have already established the theorem for all vectors of fewer than $k$ integers.
We can assume that $\sum \lambda_i\neq 0$ since 
in the case $\sum \lambda_i=0$
corollary~\ref{fibercorollary} yields
\begin{align*}
\abs{S_{\blambda}(A)}&\geq \frac{1}{k-1}\sum_{i=1}^k 
S_{\blambda^i}\abs{A}-5\\
&\geq \frac{1}{k-1}\sum_{i=1}^k 
\bigl(\norm{\blambda^i}_1\abs{A}-o(\abs{A})\bigr)-5\\
&=\frac{1}{k-1}\sum_{i=1}^k \bigl(\norm{\blambda}_1-\abs{\lambda_i}\bigr)\abs{A}-o(\abs{A})\\
&=\norm{\blambda}_1\abs{A}-o(\abs{A})
\end{align*}
by the induction hypothesis.

Let $M$ be the largest number such that $\abs{S_{\blambda}(A)}\geq
M\abs{A}-o(\abs{A})$. Similarly, let $M(\gamma)$ be the largest
number such that $\abs{S_{\blambda}(A)}\geq
M(\gamma)\abs{A}-o(\abs{A})$ for sets $A$ that are
subsets of density at least~$\gamma$ in some interval.

\begin{claim}\label{firstclaim} $M\geq M(\alpha/2)$ where $\alpha$ is as in
lemma~\ref{partitionlemma}.
\end{claim}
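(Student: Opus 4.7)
\textbf{Plan for Claim \ref{firstclaim}.} The strategy is to invoke Lemma~\ref{partitionlemma} with a large parameter $t$ and then exploit the disjointness of the sumsets across the four families to reduce the bound on $\abs{S_\blambda(A)}$ to separate bounds on the dense pieces $\mathcal{D}_t$ and the growing pieces $\mathcal{G}_t$; the small and tiny pieces will contribute a negligible amount.

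Fix $\epsilon>0$, choose $t$ large enough that $2^{-t}<\epsilon$, and take $\abs{A}$ sufficiently large that Lemma~\ref{partitionlemma} applies. By item~2 of the lemma the sumsets indexed by distinct parts are pairwise disjoint, so
\begin{equation*}
\abs{S_\blambda(A)}\geq \sum_{D\in\mathcal{D}_t}\abs{S_\blambda(D)}+\sum_{G\in\mathcal{G}_t}\abs{S_\blambda(G)}.
\end{equation*}
For each $G\in\mathcal{G}_t$, item~4 gives $\abs{S_\blambda(G)}\geq\norm{\blambda}_1\abs{G}\geq M(\alpha/2)\abs{G}$, where the second inequality uses the trivial bound $M(\alpha/2)\leq\norm{\blambda}_1$ obtained by testing against the arithmetic progression $\{1,\dots,n\}$. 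For each $D\in\mathcal{D}_t$, item~3 supplies a $\norm{\blambda}_1$-isomorphic copy $D'$ of $D$ of density at least $\alpha/2$ in an interval; since $\norm{\blambda}_1$-isomorphism preserves $\abs{S_\blambda}$, the definition of $M(\alpha/2)$ then yields $\abs{S_\blambda(D)}=\abs{S_\blambda(D')}\geq M(\alpha/2)\abs{D}-g(\abs{D})$ for some fixed error function $g(n)=o(n)$. Combining these with items~5 and~6, which bound the discarded parts by $\abs{A}/2^t+\alpha^{-1}\sum_{i=1}^t\abs{A}^{1-\beta^i}\leq \epsilon\abs{A}+o(\abs{A})$, we arrive at
\begin{equation*}
\abs{S_\blambda(A)}\geq M(\alpha/2)(1-\epsilon)\abs{A}-\sum_{D\in\mathcal{D}_t}g(\abs{D})-o(\abs{A}).
\end{equation*}

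The main obstacle is to argue that the aggregate error $\sum_{D\in\mathcal{D}_t}g(\abs{D})$ is $o(\abs{A})$, since a priori $\mathcal{D}_t$ may contain many sets. The key point is that item~3 forces $\abs{D}\geq(\alpha/2)\abs{A}^{\beta^t}$, which tends to infinity with $\abs{A}$; hence $g(\abs{D})/\abs{D}$ can be made uniformly small over $D\in\mathcal{D}_t$ by taking $\abs{A}$ large, and so $\sum_D g(\abs{D})\leq\bigl(\sup_{D\in\mathcal{D}_t} g(\abs{D})/\abs{D}\bigr)\sum_D\abs{D}=o(\abs{A})$. Letting $\abs{A}\to\infty$ and then $\epsilon\to 0$ delivers $M\geq M(\alpha/2)$.
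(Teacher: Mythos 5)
Your proposal is correct and follows essentially the same route as the paper's own proof: apply lemma~\ref{partitionlemma} with $t$ large, use the disjointness of the sumsets $S_{\blambda}(B)$ over the parts, bound the dense parts through the definition of $M(\alpha/2)$ (the $\norm{\blambda}_1$-isomorphism preserving $\abs{S_{\blambda}}$), the growing parts through $\norm{\blambda}_1\geq M(\alpha/2)$, and discard the small and tiny parts. The only difference is bookkeeping: the paper fixes $\epsilon>0$ and a size threshold $N$ (requiring $\abs{A}\geq N^{\beta^{-t}}$), while you carry a fixed $o(n)$ error function and use $\abs{D}\geq(\alpha/2)\abs{A}^{\beta^t}$ to make it uniformly negligible, which is the same idea.
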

\begin{claim}\label{secondclaim}For every $\delta>0$ and $0<\gamma<1$
\begin{equation*}
M(\gamma)\geq \min\left(M+\Bigl(M\bigl(\gamma(1+\delta/4\norm{\blambda}_{\infty}^2)\bigr)-M\Bigr)\frac{\delta}{4\norm{\blambda}_{\infty}^3},\norm{\blambda}_1-2\delta/\gamma\right).
\end{equation*}
In case $\gamma(1+\delta/4\norm{\blambda}_{\infty}^2)>1$ the right hand side
should be interpreted as~$\norm{\blambda}_1-2\delta/\gamma$.
\end{claim}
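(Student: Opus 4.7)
The plan is to apply Lemma~\ref{inductlemma} to $A\subset[1,n]$ of density at least $\gamma$ with parameter $\delta$, and match its two alternatives to the two terms of the minimum.

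Alternative~(I) yields the second term. Each $\blambda^i/\tau_i$ is a coprime $(k-1)$-tuple, so the inductive hypothesis of Theorem~\ref{mainthm} (induction on $k$) gives $|S_{\blambda^i}(A)|=|S_{\blambda^i/\tau_i}(A)|\ge(\norm{\blambda^i}_1/\tau_i)|A|-o(|A|)$, whence $\tau_i|S_i|\ge\norm{\blambda^i}_1|A|-o(|A|)$. Summing, using $\sum_i\norm{\blambda^i}_1=(k-1)\norm{\blambda}_1$ and $n\le|A|/\gamma$, gives $|S_\blambda(A)|\ge(\norm{\blambda}_1-2\delta/\gamma)|A|-o(|A|)$.

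Alternative~(II) yields the first term. WLOG $A_{i,r^*}\subset[1,n']$ with $n'=(1-\delta/|\lambda_i|)n$; set $A_0=A\cap[1,n']$, $A_1=A\setminus A_0$, and $c=\delta/(4\norm{\blambda}_\infty^3)$. A brief computation using $|\lambda_i|\le\norm{\blambda}_\infty$ and $\norm{\blambda}_\infty\ge 1$ shows $(1-c)/(1-\delta/|\lambda_i|)\ge 1+\delta/(4\norm{\blambda}_\infty^2)$. I then split on $|A_1|$. If $|A_1|\le c|A|$, the density of $A_0$ in $[1,n']$ is at least $\gamma(1-c)/(1-\delta/|\lambda_i|)\ge\gamma'=\gamma(1+\delta/(4\norm{\blambda}_\infty^2))$. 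Combined with a top-strip estimate $|S_\blambda(A)|\ge|S_\blambda(A_0)|+M|A_1|-o(|A|)$, this yields
\begin{equation*}
|S_\blambda(A)|\ge M(\gamma')|A_0|+M|A_1|-o(|A|)=M|A|+(M(\gamma')-M)|A_0|-o(|A|),
\end{equation*}
and since $|A_0|\ge(1-c)|A|\ge c|A|$, this beats $M|A|+(M(\gamma')-M)c|A|-o(|A|)$.

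If instead $|A_1|>c|A|$, one establishes the stronger top-strip bound
\begin{equation*}
|S_\blambda(A)|\ge M|A_0|+\norm{\blambda}_1|A_1|-o(|A|),
\end{equation*}
which gives $|S_\blambda(A)|\ge M|A|+(\norm{\blambda}_1-M)c|A|-o(|A|)\ge M|A|+(M(\gamma')-M)c|A|-o(|A|)$ since $M(\gamma')\le\norm{\blambda}_1$. The main technical obstacle is this top-strip estimate: one must show that the elements of $A_1$, living in the short interval $(n',n]$, contribute $\norm{\blambda}_1\cdot|A_1|$ distinct new elements to $S_\blambda(A)$ beyond $S_\blambda(A_0)$. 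I expect this to come from adapting the marking scheme in the proof of Lemma~\ref{inductlemma} to the two-piece decomposition $A=A_0\cup A_1$ (rather than a residue-class partition), with care needed to avoid double-counting when $\blambda$ has mixed signs. The edge case $\gamma'>1$ is resolved by the stated convention that the minimum reduces to the second term.
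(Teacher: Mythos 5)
Your treatment of alternative~\ref{inductalt} is exactly the paper's, and the overall ``density increment versus full bound'' dichotomy is also the right frame. But in alternative~\ref{sparsealt} there is a genuine gap, and it sits precisely where you flag ``the main technical obstacle.'' Your split $A=A_0\cup A_1$ at the point $n'=(1-\delta/\abs{\lambda_i})n$ forces you to prove a top-strip estimate, and the version you need when $\abs{A_1}>c\abs{A}$, namely $\abs{S_{\blambda}(A)}\geq M\abs{A_0}+\norm{\blambda}_1\abs{A_1}-o(\abs{A})$, is essentially the theorem itself applied to $A_1$ (a statement of the form ``an arbitrary set of integers gains $\norm{\blambda}_1$ per element''), so the argument is circular: within the claim there is no induction on the size of $A$ available, only the asymptotic constants $M$ and $M(\cdot)$, and $M\leq\norm{\blambda}_1$ is all they give. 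Adapting the marking scheme of lemma~\ref{inductlemma} will not produce it either: that scheme compares $S_{\blambda}(A)$ with the lower-order sums $S_{\blambda^i}(A)$ via extreme elements of residue classes; it never yields a per-element (or per-subset) gain of $\norm{\blambda}_1$ --- indeed the paper's discussion after theorem~\ref{thmAthreeA} explains that exactly this kind of ``each new element adds $\norm{\blambda}_1$ sums'' counting already fails for $A+3\cdot A$. Moreover, even your weaker case-1 estimate $\abs{S_{\blambda}(A)}\geq\abs{S_{\blambda}(A_0)}+M\abs{A_1}-o(\abs{A})$ relies on $S_{\blambda}(A_1)$ being disjoint from $S_{\blambda}(A_0)$, which is simply false for mixed-sign $\blambda$ (e.g.\ $\blambda=(2,-1)$: the images of the two intervals overlap heavily), and dropping the $A_1$ term loses too much to recover the stated bound.

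The paper's proof avoids any such estimate by keeping the residue structure that alternative~\ref{sparsealt} hands you. It partitions $A$ into the classes $A_r$ modulo $\tau_i$; since $\tau_i$ is coprime to $\sum_j\lambda_j$, the sets $S_{\blambda}(A_r)$ are pairwise disjoint for \emph{any} sign pattern, so their sizes add. Compressing each class to $B_r=\{\lfloor a/\tau_i\rfloor: a\in A_r\}$ puts each $B_r$ in an interval of length about $n/\tau_i$, and the distinguished class $B_{r^*}$ in a strictly shorter one, so the total length of the containing intervals is at most $n(1-\delta/2\abs{\lambda_i}\tau_i)$. Discarding the classes of size at most $\abs{A}\delta/4\abs{\lambda_i}\tau_i^2$, a pigeonhole over the remaining intervals produces one class $B_{r_0}$ that is simultaneously of size at least $\abs{A}\delta/4\abs{\lambda_i}\tau_i^2$ and of density at least $\gamma(1+\delta/4\abs{\lambda_i}\tau_i)$ in its interval; the claim then follows by applying the definition of $M(\gamma(1+\delta/4\abs{\lambda_i}\tau_i))$ to $B_{r_0}$ and $M(\epsilon)\geq M$ to all other not-too-small classes, with no counting of ``new elements contributed by a strip'' anywhere. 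If you want to repair your write-up, replace the interval split $A_0\cup A_1$ by this residue-class-plus-compression decomposition; your alternative-\ref{inductalt} computation and the concluding numerics (including the reduction from $\abs{\lambda_i}\tau_i^2$ to $\norm{\blambda}_{\infty}^3$ and the convention when $\gamma(1+\delta/4\norm{\blambda}_{\infty}^2)>1$) can stay as they are.
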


\begin{proof}[Proof of claim~\ref{firstclaim}]
Let $\alpha$ and $\beta$ be as in lemma~\ref{partitionlemma}.
Fix $\epsilon>0$ and an integer $t\geq 0$.
Let $N$ be so large that $\abs{S_{\blambda}(A)}\geq
\bigl(M(\alpha/2)-\epsilon\bigr)\abs{A}$ for sets $A$
with at least $N$ elements that are
subsets of density at least~$\alpha/2$ in some interval.
lemma~\ref{partitionlemma} then implies that
$\abs{S_{\blambda}(A)}\geq \bigl(M(\alpha/2)-\epsilon\bigr)(1-2^{-t}-\frac{1}{\alpha}\sum_{i=1}^t \abs{A}^{-\beta^i})\abs{A}$
if $\abs{A}\geq N^{\beta^{-t}}$. Therefore,
$M\geq \bigl(M(\alpha/2)-\epsilon\bigr)(1-2^{-t})$ for
every $\epsilon>0$ and every $t\geq 0$.
\end{proof}
\begin{proof}[Proof of claim~\ref{secondclaim}]
Fix an $\epsilon>0$. Let $N$ be so large that
$\abs{S_{\blambda}(A)}\geq \bigl(M(\epsilon)-\epsilon\bigr)\abs{A}$ for sets $A$
with at least $N$ elements that are subsets of intervals of density at least~$\epsilon$.
Moreover, let also $N$ be so large that
$\abs{S_{\blambda}(A)}\geq \Bigl(M\bigl(\gamma(1+\delta/4\abs{\lambda_i}\tau_i)\bigr)-\epsilon\Bigr)\abs{A}$ 
for sets $A$ that are subsets of density at least~$\gamma(1+\delta/4\abs{\lambda_i}\tau_i)$ in some interval.

Let $A$ be a subset of an interval of density at least $\gamma$,
and suppose $\abs{A}\geq N\max_i \tau_i/\epsilon$.
Without loss of generality we may assume that
$A\subset\{1,\dotsc,n\}$ and $1,n\in A$.
Apply lemma~\ref{inductlemma} with $\delta$ as in
the statement of the claim. If 
the alternative~\ref{inductalt} holds, then
\begin{align*}
S_{\blambda}(A)&\geq \frac{1}{k-1}\sum_{i=1}^k \tau_i\abs{S_{\blambda^i}(A)}-2\delta n-\tau_1\\
&\geq \frac{1}{k-1}\sum_{i=1}^k \tau_i\abs{A}\norm{\blambda^i/\tau_i}_1-o(\abs{A})-2\delta n-\tau_1\\
&=\frac{1}{k-1}\sum_{i=1}^k \bigl(\norm{\blambda}-\abs{\lambda_i}\bigr)-2\delta n-o(n)\\
&=\norm{\blambda}_1\abs{A}-2\delta n-o(n)\\
&\geq \bigl(\norm{\blambda}_1-2\delta/\gamma-o(1)\bigr)\abs{A}.
\end{align*}
Suppose the alternative~\ref{sparsealt} holds, and let $i$ and $r^*$ be
given as in the alternative. For $r\in \Z/\tau_i\Z$ 
let $A_r=\{a\in A: a\equiv r\pmod{\tau_i}\}$. Since $\blambda$ is
a coprime vector, $\tau_i$ is coprime with $\sum_{j=1}^k \lambda_j$.
Therefore $S_{\blambda}(A_{r_1})$ is disjoint from
$S_{\blambda}(A_{r_2})$ for~$r_1\neq r_2$. Let 
$B_r=\{\lfloor a/\tau_i\rfloor : a\in A_r\}$. Clearly
$\abs{S_{\blambda}(A_r)}=\abs{S_{\blambda}(B_r)}$. Each set
$B_r$ is contained in an interval of length 
$\lceil n/\tau_i\rceil$. Moreover, $B_{r^*}$ is contained
in a shorter interval of length $\lceil n(1-\delta/\abs{\lambda_i})/\tau_i\rceil$.
Therefore the total length of the intervals containing $\{B_r\}_{r\in\Z/\tau_i\Z}$
is at most
\begin{equation*}
n\left(1-\frac{\delta}{\abs{\lambda_i}\tau_i}\right)+\tau_i
\leq n\left(1-\frac{\delta}{2\abs{\lambda_i}\tau_i}\right).
\end{equation*}
Let $R_1=\{r\in\Z/\tau_i\Z : \abs{B_r}\leq \abs{A}\delta/4\abs{\lambda_i}\tau_i^2\}$.
Then $\sum_{r\in R_1}\abs{B_r}\leq \abs{A}\delta/4\abs{\lambda_i}\tau_i$.
Therefore there is an $r_0\in (\Z/\tau_i\Z)\setminus R_1$ such that
the density of $B_{r_0}$ in the appropriate interval is
at least 
\begin{equation*}
\frac{\abs{A}-\sum_{r\in R_1}\abs{B_r}}
{n(1-\delta/2\abs{\lambda_i}\tau_i)}\geq 
\gamma\frac{1-\delta/4\abs{\lambda_i}\tau_i}{1-\delta/2\abs{\lambda_i}\tau_i}
\geq \gamma(1+\delta/4\abs{\lambda_i}\tau_i).
\end{equation*}
Let $R_2=\{r\in\Z/\tau_i\Z : \abs{B_r}\leq \epsilon\abs{A}/\tau_i\}$. Then
\begin{align*}
S_{\blambda}(A)&\geq \sum_{r\in \Z/\tau_i\Z}\abs{S_{\blambda}(B_r)}\\
&\geq \sum_{r\in (\Z/\tau_i\Z)\setminus R_2}\abs{S_{\blambda}(B_r)}\\
&=\abs{S_{\blambda}(B_{r_0})}+\sum_{r\in (\Z/\tau_i\Z)\setminus (R_2\cup\{r_0\})}\abs{S_{\blambda}(B_r)}\\
&\geq \Bigl(M\bigl(\gamma(1+\delta/4\abs{\lambda_i}\tau_i)\bigr)-\epsilon\Bigr)\abs{B_{r_0}}
+\sum_{r\in (\Z/\tau_i\Z)\setminus (R_2\cup\{r_0\})}(M(\epsilon)-\epsilon)\abs{B_r}\\
&\geq (M(\epsilon)-\epsilon)\abs{A}(1-\epsilon)+
\Bigl(M\bigl(\gamma(1+\delta/4\abs{\lambda_i}\tau_i)\bigr)-M(\epsilon)\Bigr)\abs{B_{r_0}}.
\end{align*}
Since $M(\epsilon)\geq M$ and $\epsilon$ can be chosen arbitrarily small, we infer
\begin{equation*}
M(\gamma)\geq \min\left(M+\Bigl(M\bigl(\gamma(1+\delta/4\abs{\lambda_i}\tau_i)\bigr)-M\Bigr)\frac{\delta}{4\abs{\lambda_i}\tau_i^2},\norm{\blambda}_1-2\delta/\gamma\right).
\end{equation*}
Since $\tau_i^2\abs{\lambda_i}\leq \norm{\blambda}_{\infty}^3$ the 
claim~\ref{secondclaim} follows.
\end{proof}
Claims \ref{firstclaim} and \ref{secondclaim} imply the theorem. Indeed,
fix $\delta>0$ and assume that 
$M\leq \norm{\blambda}_1-4\delta/\alpha$.
Let 
\begin{equation*}
\Gamma=\bigl\{\gamma\in [\alpha/2,1] : M\geq M(\gamma)\bigr\}.
\end{equation*}
By claim~\ref{firstclaim} the set $\Gamma$ is non-empty.
By claim~\ref{secondclaim} $\gamma\in\Gamma$ implies that either
\begin{equation*}
M\geq M(\gamma)\geq \norm{\blambda}_1-2\delta/\gamma
\end{equation*}
which is inconsistent with the assumption above, or that
\begin{equation*}
M\geq M(\gamma)\geq M+\Bigl(M\bigl(\gamma(1+\delta/4\norm{\blambda}_{\infty}^2)\bigr)-M\Bigr)\frac{\delta}{4\norm{\blambda}_{\infty}^3}
\end{equation*}
implying
\begin{equation*}
M\geq M\bigl(\gamma(1+\delta/4\norm{\blambda}_{\infty}^2)\bigr)
\end{equation*}
and $\gamma(1+\delta/4\norm{\blambda}_{\infty}^2)\in \Gamma$. However,
this is a contradiction since no element in $\Gamma$ exceeds~$1$.
Thus
$M\geq \norm{\blambda}_1-4\delta/\alpha$ for every $\delta>0$,
and it follows that $M\geq \norm{\blambda}_1$.
\end{proof}

\section{Pl\"unnecke-type inequalities on sums of dilates}
\begin{proof}[Proof of theorem~\ref{gentriagineq}]
First we deal with the case $\lambda_1,\dotsc,\lambda_k>0$. Without loss of generality
$0\in A$. 
Let $r=\max_i \lfloor \log_2 \lambda_i\rfloor$. Write $\lambda_i$ in the base $2$ as 
$
\lambda_i=\sum_{j=0}^{r}
\lambda_{i,j} 2^j
$ with $\lambda_{i,j}\in\{0,1\}$.
Then clearly
\begin{equation}\label{digitinclusion}
S_{\blambda}(A)\subset \sum_{j=0}^r \Bigl(\sum_{i=1}^k \lambda_{i,j}\Bigr)*(2^j\cdot A).
\end{equation}
Since by Pl\"unnecke's inequality $\abs{t*(2\cdot A)+2\cdot A-2\cdot A+A}\leq
\abs{(2t+3)*A-2*A}\leq K^{2t+5}\abs{A}$, lemma~\ref{coveringlemma}
implies that there are $X_1,\dotsc,X_r$ satisfying
\begin{equation}\label{coveringinclusion}
\Bigl(\sum_{i=1}^k \lambda_{i,j}\Bigr)*(2\cdot A)+2\cdot A-2\cdot A\subset
A-A+X_j, \qquad\abs{X_j}\leq K^{2\sum_{i=1}^k\lambda_{i,j}+5}.
\end{equation}
Let $Y_j=2^{j-1}\cdot X_j$ for $j=1,\dotsc,r$. Note that $\abs{Y_j}=\abs{X_j}$.
The inclusion \eqref{coveringinclusion} with $j=r$ and \eqref{digitinclusion} combine into
\begin{equation*}
S_{\blambda}(A)\subset \sum_{j=0}^{r-1} 
\Bigl(\sum_{i=1}^k \lambda_{i,j}\Bigr)*(2^j\cdot A)+2^{r-1}\cdot A-2^{r-1}\cdot A+Y_r.
\end{equation*}
Repeatedly using inclusion \eqref{coveringinclusion} for $j=r-1,r-2,\dotsc,1$ we obtain
\begin{equation*}
S_{\blambda}(A)\subset A-A+\Bigl(\sum_{i=1}^k \lambda_{i,0}\Bigr)*A+Y_1+\dotsb+Y_r
\end{equation*}
implying
\begin{align*}
\abs{S_{\blambda}(A)-A}&\leq \Bigl\lvert A-A-A+\Bigl(\sum_{i=1}^k \lambda_{i,0}\Bigr)*A\Bigr\rvert\prod_{j=1}^r
\abs{X_j}\\
&\leq \abs{A}K^{3+5r+2\sum_{j=0}^r\sum_{i=1}^k \lambda_{i,j}}.
\end{align*}

Now we turn to the case when some of $\lambda$'s are negative. Say
$\lambda_1,\dotsc,\lambda_p$ are positive, whereas $\lambda_{p+1},
\dotsc,\lambda_k$ are negative. As before we let $\abs{\lambda_i}=\sum_{j=0}^r 
\lambda_{i,j}2^j$. Let
$B=\lambda_1\cdot A+\dotsb+\lambda_p\cdot A$ to be the sum
of positive dilates, and $C=\lambda_{p+1}\cdot A+\dotsb+\lambda_k\cdot A$
to be the sum of negative dilates. 
By above 
\begin{align*}
\abs{B+A}&\leq \abs{A}K^{4+5r+2\sum_{j=0}^r\sum_{i=1}^p \lambda_{i,j}},\\
\abs{C+A}&\leq \abs{A}K^{3+5r+2\sum_{j=0}^r\sum_{i=p+1}^k \lambda_{i,j}}.\\
\end{align*}
By the triangle inequality (lemma~\ref{ruzsatriag})
\begin{equation*}
\abs{B+C}\leq \frac{\abs{A+B}\abs{A+C}}{\abs{A}}\leq 
\abs{A}K^{7+10r+2\sum_{j=0}^r\sum_{i=1}^k \lambda_{i,j}}.
\end{equation*}
Since $\abs{\sum_{j=0}^r \lambda_{i,j}}\leq \log_2 (1+\abs{\lambda_i})$
and $r\leq \max_i \log_2 (1+\abs{\lambda_i})$ the theorem follows.
\end{proof}
Observe that the actual bound obtained in the course of the proof of 
theorem~\ref{gentriagineq} involves the sum of binary digits of $\lambda_i$
rather than $\log(1+\abs{\lambda_i})$. In particular if 
$\lambda_1,\dotsc,\lambda_k$ are $k$ positive integers not exceeding $2^k$, each
containing no more than $7$ ones in binary development, then 
$\abs{A+A}\leq K\abs{A}$ implies $\abs{S_{\blambda}(A)}\leq K^{100 k}\abs{A}$.
Since the proof of theorem~\ref{gentriagineq} could be easily
adapted to use $b$-ary expansion in place of binary, similar results
are true of $\lambda$'s that have sparse $b$-ary expansion at the cost
of worsening the constant $100$ above if $b$ gets large.
Since numbers with $7$ ones in binary development 
are commonly believed to look quite random
in almost any other base, any bound that depends
on the base in which a number is written,
is unnatural. Perhaps, a condition
on the size of $\lambda$'s is all one needs:
\begin{question}
Suppose $\blambda=(\lambda_1,\dotsc,\lambda_k)$ satisfies 
$\abs{\lambda_i}\leq 2^k$, does it follow that
\begin{equation*}
\frac{\abs{\lambda_1\cdot A+\dotsb+\lambda_k\cdot A}}{\abs{A}}\leq 
\left(\frac{\abs{A+A}}{\abs{A}}\right)^{C k}
\end{equation*}
for an absolute constant~$C$?
\end{question}
\noindent With $C k^2$ in place of $C k$ the estimate follows from theorem~\ref{gentriagineq}.

One can also use the triangle inequality for proving inequalities similar to that in theorem~\ref{gentriagineq}:
\begin{theorem} For any $k\in\N$ and $\lambda\in\Z$ we have
\begin{equation*}
\frac{\abs{A+\lambda^k\cdot A}}{\abs{A}}\leq \left(\frac{\abs{A+A}}{\abs{A}}\right)^{k(\abs{\lambda}+1)}.
\end{equation*}
\end{theorem}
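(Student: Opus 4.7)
The plan is to induct on $k$, using the Ruzsa triangle inequality (lemma~\ref{ruzsatriag}) with $\lambda\cdot A$ as the ``middle'' set, together with the observation that dilation by a nonzero integer is a bijection. Throughout, write $K=\abs{A+A}/\abs{A}$. The case $\lambda=0$ is trivial (the left hand side equals $\abs{A}$), so assume $\lambda\neq 0$.

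For the base case $k=1$ one has $A+\lambda\cdot A\subset A+\abs{\lambda}*A$ (or $A-\abs{\lambda}*A$ if $\lambda<0$), so Pl\"unnecke's inequality gives $\abs{A+\lambda\cdot A}\leq K^{\abs{\lambda}+1}\abs{A}$. For the inductive step, assume the bound at $k-1$. Apply lemma~\ref{ruzsatriag} with the triple $(A,\ \lambda\cdot A,\ \lambda^k\cdot A)$:
\begin{equation*}
\abs{A+\lambda^k\cdot A}\leq \frac{\abs{A+\lambda\cdot A}\cdot\abs{\lambda\cdot A+\lambda^k\cdot A}}{\abs{\lambda\cdot A}}.
\end{equation*}
The key simplification is that $\lambda\cdot A+\lambda^k\cdot A=\lambda\cdot(A+\lambda^{k-1}\cdot A)$, so by bijectivity of the dilation map $x\mapsto \lambda x$ we have $\abs{\lambda\cdot A+\lambda^k\cdot A}=\abs{A+\lambda^{k-1}\cdot A}$, and similarly $\abs{\lambda\cdot A}=\abs{A}$. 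Combining with the base case and the inductive hypothesis yields
\begin{equation*}
\abs{A+\lambda^k\cdot A}\leq \frac{K^{\abs{\lambda}+1}\abs{A}\cdot K^{(k-1)(\abs{\lambda}+1)}\abs{A}}{\abs{A}}=K^{k(\abs{\lambda}+1)}\abs{A},
\end{equation*}
which is the desired bound.

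There is no real obstacle here; the only point to verify carefully is the identity $\abs{\lambda\cdot A+\lambda^k\cdot A}=\abs{A+\lambda^{k-1}\cdot A}$, which makes the recursion telescope cleanly. The proof uses only the sum form of Ruzsa's triangle inequality once per level of the induction, together with a single application of Pl\"unnecke's inequality at the base, which is exactly what is needed to produce a factor $K^{\abs{\lambda}+1}$ per level and hence the clean exponent $k(\abs{\lambda}+1)$.
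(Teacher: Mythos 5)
Your proof is correct and is essentially the paper's own argument: the paper applies Ruzsa's triangle inequality to get $\abs{A+(\lambda_1\lambda_2)\cdot A}\leq \abs{A+\lambda_1\cdot A}\abs{A+\lambda_2\cdot A}/\abs{A}$ (using the same dilation identity you verify) and then concludes by induction on $k$ with Pl\"unnecke's inequality at the base, exactly as you do with $\lambda_1=\lambda$, $\lambda_2=\lambda^{k-1}$. Your write-up just makes the base case (including negative $\lambda$ via $A-\abs{\lambda}*A$) and the telescoping explicit.
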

\begin{proof}
By the triangle inequality
\begin{align*}
\abs{A+(\lambda_1\lambda_2)\cdot A}&
\leq \frac{\abs{A+\lambda_1\cdot A}\abs{\lambda_1\cdot A+(\lambda_1 \lambda_2)\cdot A}}{\abs{\lambda_1\cdot A}}
=\frac{\abs{A+\lambda_1\cdot A}\abs{A+\lambda_2\cdot A}}{\abs{A}}
\end{align*}
and the theorem follows from Pl\"unnecke's inequality by induction on~$k$.
\end{proof}
Though a more careful argument can improve on the constants in theorem~\ref{gentriagineq}, the simplest case of $A+2\cdot A$ seems to be out of reach.  
\begin{question}
Is $\abs{A+2\cdot A}/\abs{A}\leq (\abs{A+A}/\abs{A})^p$ for
some $p<3$?
\end{question}\vspace{1ex}

\noindent \textbf{Acknowledgements.} I am grateful to Brooke Orosz, and
the anonymous referee who
read preliminary versions of the paper, and pointed out many 
inaccuracies.  This work was inspired by conversations
with Jacob Fox and Jacob Tsimerman.

After this paper was completed, I was informed by Manuel Silva
that he with Javier Cilleruelo and Carlos Vinuesa proved that
$\abs{A+3\cdot A}\geq 4\abs{A}-4$ and characterized the cases
when equality occurs.

\bibliographystyle{alpha}
\bibliography{dilates}

\end{document}